\newtheorem{thm}{Theorem}
\newtheorem{lem}{Lemma}
\newtheorem{pro}{Proposition}
\newtheorem{cor}{Corollary}
\newcommand{\bt}{{\textcolor{blue}{\scriptstyle\blacktriangle}}}
\newcommand{\sbt}{{\textcolor{blue}{\tiny\blacktriangle}}}
\newcommand{\bl}{{\textcolor{red}{\scriptstyle\blacklozenge}}}
\newcommand{\sbl}{{\textcolor{red}{\tiny\blacklozenge}}}
\newcommand{\ws}{{\scriptstyle\square}}
\newcommand{\bc}{{\textstyle\bullet}}\newcommand{\sbc}{{\scriptstyle\bullet}}
\newcommand{\wc}{{\textstyle\circ}}
\newcommand{\re}{\textcolor{red}{\bl\!\rule[1.5pt]{10pt}{1.7pt}}\!\ws}
\newcommand{\pe}{\bl\!\textcolor{blue}{\rule[1.5pt]{10pt}{1.7pt}}\!\bt}
\newcommand{\be}{\bc\!\textcolor{black}{\rule[1.5pt]{10pt}{1.7pt}}\!\ws}
\newcommand{\bcedge}{\bc\!\!\rule[2pt]{10pt}{1pt}}
\newcommand{\wsedge}{\ws\!\rule[2pt]{8pt}{1pt}}
\newcommand{\bledge}{\bl\!\!\textcolor{blue}{\rule[2pt]{10pt}{1pt}}}
\newcommand{\btedge}{\bt\!\!\textcolor{blue}{\rule[2pt]{10pt}{1pt}}}
\newcommand{\cQ}{\mathcal{Q}}
\newenvironment{descriptioncompact}{\begin{description} }{\end{description}}
\begin{document}
\title[Order One Catalytic Equations and context-free specifications]{From order one catalytic decompositions to context-free specifications: the rewiring bijection}

\author{Enrica Duchi}
\thanks{\href{mailto:Enrica.Duchi@irif.fr}{Enrica.Duchi@irif.fr}.  ED was partially supported by ANR Projects IsOMa (Grant number ANR-21-CE48-0007) and CartesEtPlus (Grant number ANR-23-CE48-0018).}
\author{Gilles Schaeffer}
\thanks{\href{mailto:Gilles.Schaeffer@lix.polytechnique.fr}{Gilles.Schaeffer@lix.polytechnique.fr}.  GS was partially supported by ANR Project 3DMaps (Grant number ANR-20-CE48-0018), LambdaComb (Grant number ANR-21-CE48-0017) and CartesEtPlus (Grant number ANR-23-CE48-0018).}

\address{\textbf{E.D.} IRIF, Université Paris Cité}
\address{\textbf{G.S.} LIX, CNRS, Ecole Polytechnique, Institut Polytechnique de Paris}

\begin{abstract}{A celebrated result of Bousquet-Mélou and Jehanne states
  that the bivariate power series solutions of so-called
  \emph{discrete differential equations}, also known as
  \emph{polynomial equations with one catalytic variable}, or simply
  \emph{catalytic equations}, are algebraic series.  We give a purely
  combinatorial derivation of this result in the case of \emph{order
  one catalytic equations}, involving only one univariate
  unknown series.  In particular, our approach provides a tool to
  produce context-free specifications or bijections with simple
  multi-type families of trees for the derivation trees of
  combinatorial structures that are directly governed by an order one
  catalytic decomposition.

This provides a simple unified framework to deal with various
combinatorial interpretation problems that were solved or raised over
the last 50 years since the first such catalytic equation was written
by W. T. Tutte in the late 60's to enumerate rooted planar maps.}
\end{abstract}
\maketitle

\section{Introduction}
\subsection{Order one catalytic equations.} We are interested in a family of functional equations that have surfaced in various places in the mathematical literature since the 60s: an \emph{order one catalytic equation} is an equation of the form
\begin{equation}\label{eqn:eq1}
F(t,u)=t\cdot Q\left(F(t,u),\frac1u(F(t,u)-F(t,0)),u\right),
\end{equation}
with $Q(v,w,u)$ a given series in the ring of formal power series  $\mathbb{Q}[[v,w,u]]$ 
and $F(t,u)$ an unknown formal power series in
$\mathbb{Q}[[t,u]]$.
Equation~\eqref{eqn:eq1} is a special instance of the more general
family of \emph{equations with one catalytic variable} thoroughly discussed in
\cite{BMJ}, and also known as \emph{discrete differential equations}
as they are often written as above in terms of iterates of a divided difference
operator $\Delta F=\frac1u (F(t,u)-F(t,0))$ in the so-called
\emph{catalytic variable} $u$. These equations have gained over the
years the status of standard ingredients in enumerative combinatorics,
as discussed for instance in
\cite{BCNSED,gs:BM-ICM,BMJ,DNY,notar2024}.


The formal power series solutions of
order one catalytic equations are particularly well understood: as
discussed in \cite{S23}, it easily follows from the celebrated approach of  
Bousquet-Mélou and Jehanne \cite{BMJ} that the univariate part
$f\equiv f(t)=F(t,0)$ of the solution of Equation~\eqref{eqn:eq1}
admits generically the following simple univariate parametrization:
\begin{align}
  f\;=\;C_\ws-C_\bl\cdot C_\bt, \quad\textrm{ or } \quad
  {\textstyle \frac{d}{dt}}f&=(1+C_\bc)\cdot Q(C_\ws,C_\bt,C_\bl),
  \label{eq:fCFS}
\end{align}
where $C_\ws\equiv C_\ws(t)$, $C_\bc\equiv C_\bc(t)$, $C_\bl\equiv C_\bl(t)$ and $C_\bt\equiv C_\bt(t)$ are the unique formal power series that satisfy the companion system
\begin{align}
  \left\{
 \begin{array}{rclcl}
    {C}_\ws
    &=&t\cdot Q({C}_\ws,{C}_\bt,{C}_\bl),\\
    {C}_\bc
    &=&t\cdot (1+{C}_\bc)\cdot {Q}'_v ({C}_\ws,{C}_\bt,{C}_\bl),\\
    {C}_\bl
    &=&t\cdot (1+{C}_\bc)\cdot{Q}'_w ({C}_\ws,{C}_\bt,{C}_\bl),\\
    {C}_\bt
    &=&t\cdot (1+{C}_\bc)\cdot {Q}'_u ({C}_\ws,{C}_\bt,{C}_\bl).
   \end{array}
  \right.
  \label{eq:CFS}
\end{align}
When $Q$ is a polynomial, this parametrization immediately implies
that $f$ is an algebraic series: the fact that solutions of polynomial
equations with one catalytic variable are algebraic series follows in
fact from earlier general algebraic results (\cite{Popescu}, see the
discussion in \cite[Sec. 5.3]{Bonichon}), but the explicit dependency
of Parametrization \eqref{eq:CFS} in $Q$ is remarkably appealing
from a combinatorial point of view.  In particular, when $Q$ is a
polynomial with non-negative integer coefficients, this
parametrization shows that the series $\frac{d}{dt}f$ is
$\mathbb{N}$-algebraic in the sense of \cite{gs:BM-ICM}, and since the
first occurrences of catalytic equations in the literature, this fact
has called for combinatorial interpretations. Elegant interpretations
in terms of trees were indeed given for several special instances of
these equations (see \cite{fang-fusy-nadeau} and references therein,
as well as the conclusion of the present article), based on remarkable
ad-hoc constructions building on specific properties of the underlying
combinatorial structures.

The purpose of this article, as presented in the next section, is
instead to give a generic combinatorial interpretation and derivation
of Equations~\eqref{eq:fCFS} and~\eqref{eq:CFS} directly starting from
Equation~\eqref{eqn:eq1}. On the one hand this approach provides new
insights on algebraic combinatorial generating functions as discussed
by Bousquet-Mélou in \cite{gs:BM-ICM}, and on the other hand it
yields a systematic approach to build bijections with simple
families of trees for combinatorial structures with order one
catalytic specification.

\subsection{Two combinatorial interpretations and a bijective derivation}

When $Q(v,w,u)$ is a polynomial with non-negative
integer coefficients, it is not difficult to give combinatorial
interpretations to Equation~\eqref{eqn:eq1} in terms of labeled trees
with non-negativity conditions on labels. This was done for instance
in \cite{cori-schaeffer} for a closely related family of equations, in
terms of some \emph{description trees}, or more recently in
\cite{Chen} for a special case of Equation~\eqref{eqn:eq1} in terms of
some \emph{fully parked trees}.

Under the same hypotheses on $Q(v,w,u)$,
System~\eqref{eq:CFS} is a so-called \emph{$\mathbb{N}$-algebraic
system}, and the power series $C_\ws$, $C_\bc$, $C_\bl$ and $C_\bt$
admit natural interpretations as generating functions (gf) of \emph{simple
varieties of multi-type trees} with a \emph{context-free
specification}, as discussed in \cite{gs:BM-ICM}, or \cite[Chapter I,
  ex. I.53, p82]{FS}.  However there is \emph{a priori} no clear relations
between these first and second types of interpretations.
\begin{figure}
  \centerline{\includegraphics[scale=.4]{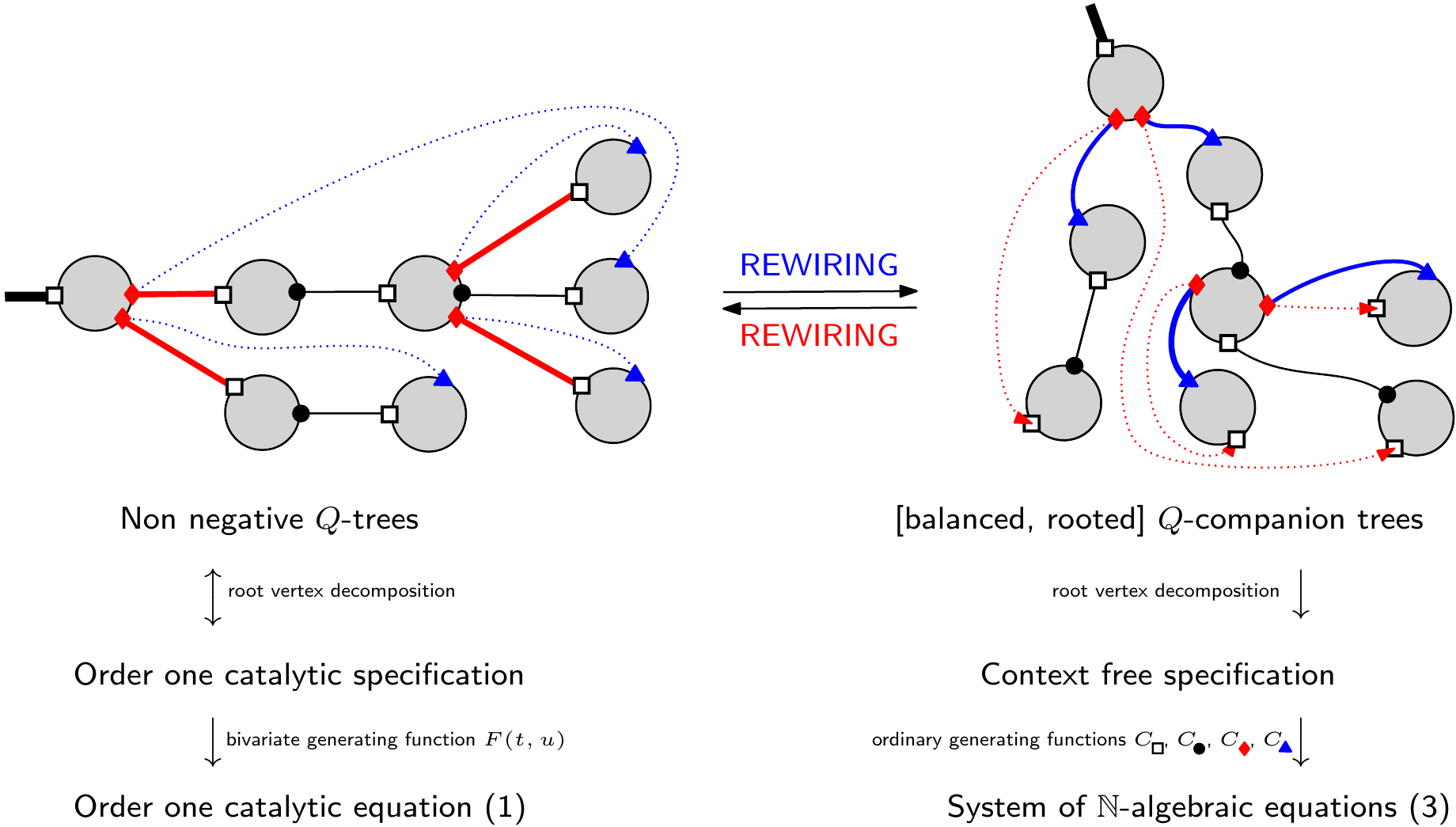}}
\caption{From order one catalytic specifications to context-free specifications, via rewiring non-negative $\cQ$-trees into $\cQ$-companion trees.\label{fig:plan}}
\end{figure}

Our main contribution is instead to introduce two families of plane
trees with vertices and edges decorated with pearls ($\ws$, $\bl$,
$\bc$, $\bt$), that give two such interpretations that can easily be
related one to the other, as illustrated by Figure~\ref{fig:plan}. The
trees of the first family, called \emph{non-negative $\cQ$-trees},
have two types of edges ($\be$ edges and $\re$ edges) and some free
$\bt$-pearls, and they obey a non-negativity condition on the
difference between the number of $\bt$-pearls and the number of
$\bl$-pearls in their subtrees: as stated in
Proposition~\ref{pro:cata}, these trees provide a generic
interpretation of Equation~\eqref{eqn:eq1}. The trees of the second
family, called \emph{rooted $\cQ$-companion trees}, have instead edges
of type $\be$ and $\pe$ and admit, as stated in Theorem~\ref{pro:CFS},
a context-free specification that interprets System~\ref{eq:CFS}.  The
relation between the two is given a simple transformation, called
\emph{rewiring}, that simply consists in replacing each $\re$ edge by
a well chosen $\pe$ edge. More precisely, as stated in
Theorem~\ref{thm:main}, rewiring yields a one-to-one mapping between
non-negative $\cQ$-trees and a certain subset of \emph{balanced}
$\cQ$-companion trees. An analysis of the balance condition,
summarized in Theorem~\ref{thm:un-balanced}, shows that these balanced
$\cQ$-companion trees are in one-to-one correspondence with unrooted
$\cQ$-companion trees, while unbalanced $\cQ$-companion trees can be
decomposed into pairs of $\bl$- and $\bt$-rooted $\cQ$-companion
trees. In particular the latter decomposition allows us to give in
Corollary~\ref{cor:F0} an interpretation of the first formula
in~\eqref{eq:fCFS}. Upon marking a vertex, balanced $\cQ$-companion
trees can instead be decomposed into pairs of $\bc$- and $\ws$-rooted
$\cQ$-companion trees, and, as discussed in Corollary~\ref{cor:F0O},
this yields a direct interpretation of the second Formula
in~\eqref{eq:fCFS}.

Let us conclude this introduction with the organization of the
article. In Section~\ref{sec:Qtrees} we first concentrate on non-negative $\cQ$-trees: after stating their definition and showing their
relation to Equation~\ref{eqn:eq1} (Section~\ref{sec:defQtrees}), we
recall some standard combinatorial results about spanning trees in
planar maps and balanced parenthesis strings (Section~\ref{sec:map})
that allow us to describe the rewiring transformation
(Section~\ref{sec:rewiring}) and start its analysis. In
Section~\ref{sec:Qcompanion} we introduce and study $\cQ$-companion
trees. It is readily clear that the image of a non-negative $\cQ$-tree
by rewiring is a balanced $\cQ$-companion tree, and the rest
of Section~\ref{sec:defQcompanion} is dedicated to the analysis of the
inverse transformation to prove the main result Theorem~\ref{thm:main}. The balance condition is analyzed in Section~\ref{sec:unrooted} and the relation of rooted $\cQ$-companion trees with System~\ref{eq:CFS} is discussed in Section~\ref{sec:decQcompanion}. As a corollary an alternative interpretation of the series in System~\ref{eq:CFS} in terms of marked non-negative $\cQ$-trees is sketched in Section~\ref{sec:markedQtrees}. Finally some examples, special cases and a slight generalization are presented in Section~\ref{sec:appli}, while a conclusion is drawn in Section~\ref{sec:Conclusion}.

\section{Non-negative $\cQ$-trees}\label{sec:Qtrees}
\subsection{Necklaces and non-negative $\cQ$-trees}\label{sec:defQtrees}
Let $\cQ$ denote a set of words on an alphabet $\{\bc,\bl,\bt\}$ of
\emph{pearls}: we identify each element $w=w_1\ldots w_k$ of $\cQ$
with a clockwise oriented necklace carrying one $\ws$-pearl followed
by the pearls $w_1,\ldots,w_k$. In the rest of the article the set
$\cQ$ will be viewed as the set of allowed vertex types for various
families of plane trees. Accordingly, to a set $\cQ$ of necklaces we
associate
the vertex type generating function $Q(v,w,u)$ as
\begin{equation}\label{eq:Q}
Q(v,w,u)=\sum_{s\in \cQ} v^{|s|_{\sbc}}w^{|s|_{\sbl}}u^{|s|_{\sbt}}.
\end{equation}
The necklace associated to the word $w=\bc\bl\bt\bc\bc\bl$ is
represented on Figure~\ref{fig:Necklaces}, together with our two
running examples of necklace sets:
$\cQ_{\mathrm{all}}=\{\bc,\bl,\bt\}^*$, the set of all necklaces, and
$\cQ_\lambda=\{\bc\bc,\bt,\bl\}$, with only three allowed necklaces,
with respective vertex generating functions $Q_{\mathrm{all}}(v,w,u)=1/(1-(v+w+u))$ and
$Q_\lambda(v,w,u)=v^2+w+u$.

\begin{figure}[t]
  \begin{center}
    \begin{minipage}{.16\linewidth}
      $w=\raisebox{-1.3em}{\includegraphics[scale=.3]{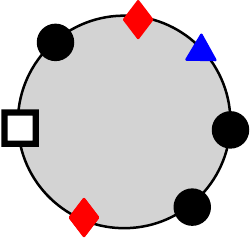}}$
    \end{minipage}
    \begin{minipage}{.8\linewidth}
      \begin{align*}
        \mathcal{Q}_{\mathrm{all}}&=\left\{\raisebox{-1.3em}{\includegraphics[scale=.3]{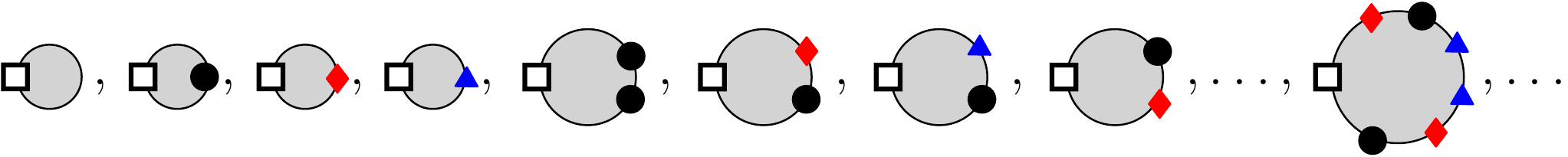}}\right\}\\
        \mathcal{Q}_\lambda&=\left\{\raisebox{-.7em}{\includegraphics[scale=.3]{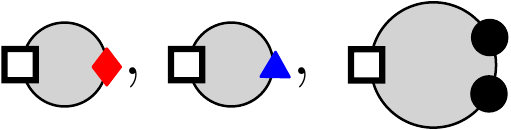}}\right\}
      \end{align*}
    \end{minipage}
  \end{center}
  \caption{\label{fig:Necklaces}The graphical representation of the necklace $w=\bc\bl\bt\bc\bc\bl$, and the necklace sets $\cQ_{\mathrm{all}}=\{\bc,\bl,\bt\}^*$ and $\cQ_\lambda=\{\bc\bc,\bt,\bl\}$.}
\end{figure}

As illustrated by Figure~\ref{fig:Q-trees}, a \emph{rooted $\cQ$-tree} is a $\ws$-rooted plane tree (\emph{aka} ordered tree) with black and red edges such that
\begin{itemize}
\item each vertex is  a copy of a necklace of $\cQ$,
\item each black edge connects a $\bc$-pearl to a $\ws$-pearl, \emph{i.e.}, takes the form $\be$,
\item each red edge connects a $\bl$-pearl to a $\ws$-pearl, \emph{i.e.}, takes the form $\re$,
\item each pearl is incident to one edge except the
  $\ws$-root and the $\bt$-pearls which are \emph{free}, that is, incident to no edge.
\end{itemize}
Here a tree is a connected graph without cycle and it is $\ws$-rooted if a $\ws$-pearl is distinguished, while the qualifier \emph{plane} (or \emph{ordered}) refers to the fact that the necklace structure on vertices induces a linear order on the children of each vertex.
We are only interested
in finite trees, so we shall assume from
now on that $\mathcal{Q}$ contains at least one vertex with no child,
that is, at least one necklace without $\bc$- and $\bl$-pearls, and for the sake of simplicity we first concentrate on the case in which the size $|\tau|$ of a $\cQ$-tree $\tau$ is the number of its
vertices (but this restriction can easily be lifted to allow vertices of various sizes, as explained in Section~\ref{sec:generic}, at the price of slightly more cumbersome notations). By construction, the size is also the number $|\tau|_\ws$ of
$\ws$-pearls (since each vertex carries one $\ws$-pearl), and the number of edges plus one:
$|\tau|=|\tau|_{\ws}=|\tau|_\bc+|\tau|_\bl+1$.  

\begin{figure}[t]
  \begin{center}
      \includegraphics[scale=.3]{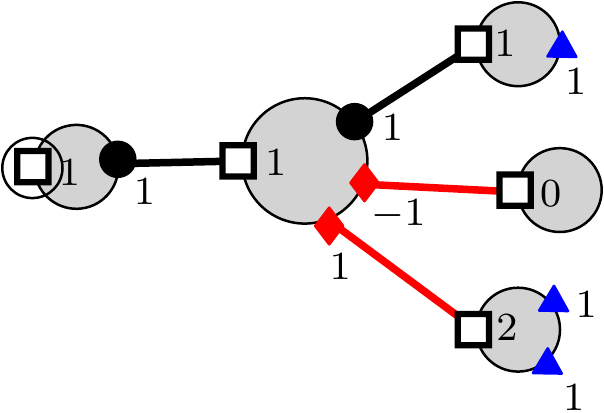}\quad
      \includegraphics[scale=.3]{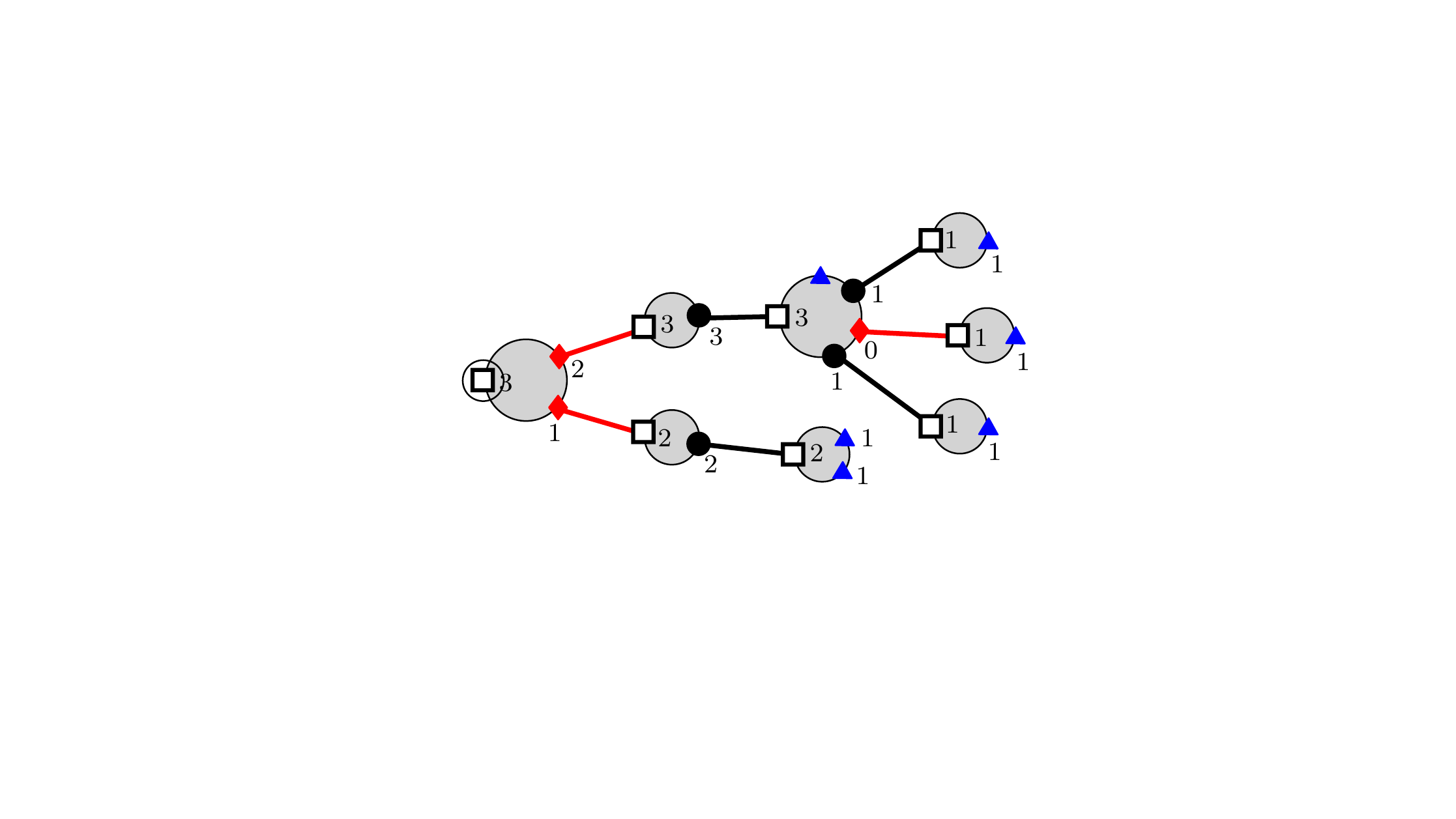}\quad
      \includegraphics[scale=.3]{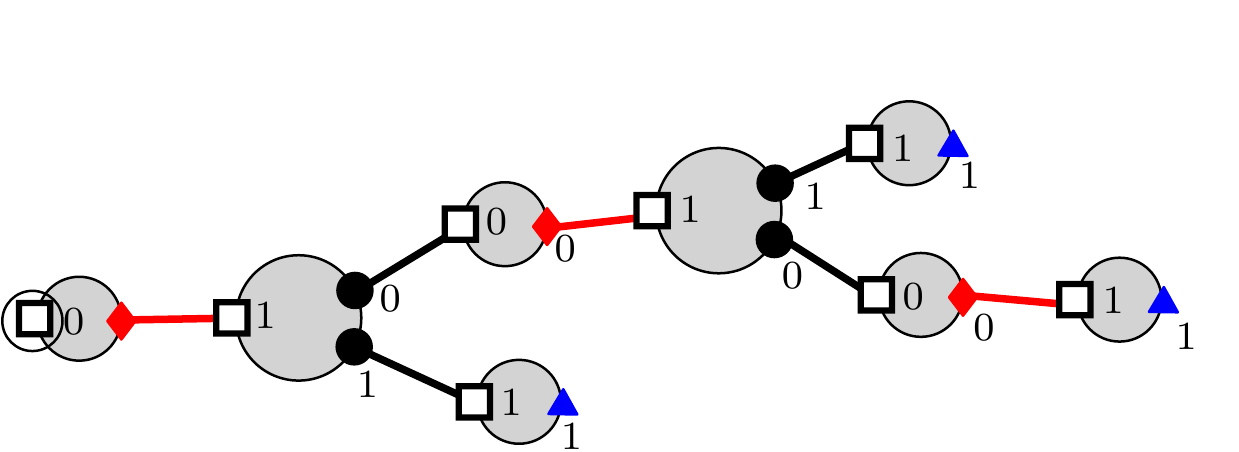}
  \end{center}
  \caption{\label{fig:Q-trees}Three rooted $\cQ_{\mathrm{all}}$-trees with their excess labels, and with root indicated by a $\fullmoon$ around the root $\ws$-pearl.  The third one is also a $\cQ_\lambda$-tree. The second and third ones  are non-negative while the first one is not. }
\end{figure}

The subtree $\tau_x$ of a rooted $\cQ$-tree $\tau$ at a pearl $x$
consists of $x$ and all the vertices, edges and pearls that are on the other side of
$x$ with respect to the root of $\tau$. The \emph{excess} of a pearl
$x$ in the $\cQ$-tree $\tau$ is the difference between the number
of $\bt$- and $\bl$-pearls in the subtree planted at $x$, $x$
included: $\mathrm{exc}(x)=|\tau_x|_\bt-|\tau_x|_\bl$. The excess of a
vertex $v$ is the excess of its local root (the only $\ws$-pearl on
$v$), and the excess of $\tau$ is the excess of its root, that
is $\mathrm{exc}(\tau)=|\tau|_\bt-|\tau|_\bl$.
A \emph{non-negative $\cQ$-tree} is a rooted $Q$-tree whose excess
is non-negative at each pearl.
These definitions are illustrated by Figure~\ref{fig:Q-trees}.
Observe that the non-negativity condition at each pearl in the
definition of non-negative $\cQ$-trees is in general more restrictive than
just saying that the excess is non-negative at each vertex: for instance this latter
condition would be satisfied by the leftmost tree in
Figure~\ref{fig:Q-trees} whereas it is not non-negative due to its $\bl$-pearl with excess $-1$.

The following alternative recursive characterization of non-negative
$\cQ$-trees follows immediately from the standard root vertex decomposition of ordered trees and its translation in terms of generating functions (cf \cite[Chapters I, III]{FS}):
\begin{pro}\label{pro:cata}
  Let $\cQ$ be as in Equation~\eqref{eq:Q} and $\mathcal{F}_{k}$ denote the
  set of non-negative $\cQ$-trees with excess $k$ and
  $\mathcal{F}=\bigcup_{k\geq0}\mathcal{F}_k$. Then the family
  $\mathcal{F}$ of non-negative $\cQ$-trees admits the following
  \emph{catalytic specification}:
  \begin{align}\label{eqn:cataspec}
  \mathcal{F}\equiv \cQ(\bc\!\!\rule[2pt]{10pt}{1pt}\mathcal{F},\bl\!\!\textcolor{red}{\rule[2pt]{10pt}{1pt}}\mathcal{F}^+,\bt),
  \end{align}
  meaning that each tree of $\mathcal{F}$ can be uniquely obtained from a necklace $s\in\cQ$ upon attaching
  \begin{itemize}
  \item a black edge carrying a tree of $\mathcal{F}$ to each $\bc$-pearl of $s$,
  \item and a red edge carrying a tree of $\mathcal{F}^+=\mathcal{F}\setminus\mathcal{F}_0$ to each $\bl$-pearl of $s$.
  \end{itemize}
  In particular, the gf $F(u)\equiv F(t,u)=\sum_{\tau\in\mathcal{F}}t^{|\tau|} u^{\mathrm{exc}(\tau)}$ is the unique formal power series solution of Equation~\eqref{eqn:eq1} with $Q$ given by Equation~\eqref{eq:Q}, and $F(u)-F(0)=F^+(u)=\sum_{\tau\in\mathcal{F}^+}t^{|\tau|} u^{\mathrm{exc}(\tau)}$.
\end{pro}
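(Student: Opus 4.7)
The plan is to apply the standard root-vertex decomposition of rooted plane trees and carefully track how the pearl-wise non-negativity constraint distributes among the subtrees.

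First, every rooted $\cQ$-tree $\tau$ decomposes uniquely into its root vertex---a necklace $s\in\cQ$ with a $\ws$-root pearl followed by pearls $w_1\ldots w_k\in\{\bc,\bl,\bt\}^*$---together with, for each $\bc$- or $\bl$-pearl $x$ of $s$, the subtree $\sigma_x$ attached to $x$ by its incident edge. This is the elementary root-vertex decomposition of plane trees and does not yet use the non-negativity condition.

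Next, I would check that the global non-negativity of $\tau$ corresponds exactly to the non-negativity of each $\sigma_x$, together with the extra constraint $\sigma_x\in\mathcal{F}^+$ whenever $x$ is a $\bl$-pearl of $s$. Using $\mathrm{exc}(x)=|\tau_x|_\bt-|\tau_x|_\bl$: at a $\bc$-pearl $x\in s$ the $\bc$-pearl contributes $0$, so $\mathrm{exc}(x)=\mathrm{exc}(\sigma_x)$, which is $\geq 0$ iff $\sigma_x\in\mathcal{F}$; at a $\bl$-pearl $x\in s$ the $\bl$-pearl contributes $-1$, so $\mathrm{exc}(x)=\mathrm{exc}(\sigma_x)-1$, which is $\geq 0$ iff $\sigma_x\in\mathcal{F}^+$; at a $\bt$-pearl the excess is $+1$, always non-negative; and for any pearl $y$ strictly inside some $\sigma_x$, the subtree $\tau_y$ lies entirely inside $\sigma_x$, so non-negativity at $y$ in $\tau$ reduces to non-negativity at $y$ in $\sigma_x$. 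This establishes the specification~\eqref{eqn:cataspec}.

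Finally, I would translate the specification into generating functions. Since the excess of $\tau$ splits additively as $|s|_\bt-|s|_\bl+\sum_{\bc\text{-pearls}}\mathrm{exc}(\sigma_x)+\sum_{\bl\text{-pearls}}\mathrm{exc}(\sigma_x)$, in the vertex-type series $Q(v,w,u)$ one substitutes $v\leftarrow F(t,u)$ for each $\bc$-pearl, $w\leftarrow u^{-1}(F(t,u)-F(t,0))$ for each $\bl$-pearl (the $u^{-1}$ absorbs the $-1$ excess of the $\bl$-pearl while the subtraction of $F(t,0)$ enforces $\sigma_x\in\mathcal{F}^+$), and $u\leftarrow u$ for each $\bt$-pearl; multiplying by $t$ for the root vertex yields exactly Equation~\eqref{eqn:eq1}. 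Uniqueness of the solution in $\cQ[[t,u]]$ is immediate: the right-hand side is a multiple of $t$, so each coefficient $[t^n u^k]F(t,u)$ is determined inductively by coefficients of strictly smaller $t$-degree. The only mildly delicate point is the book-keeping at $\bl$-pearls, where the $-1$ excess of the pearl must be reconciled with the constraint $\sigma_x\in\mathcal{F}^+$ to produce the characteristic divided-difference $(F(t,u)-F(t,0))/u$ of Equation~\eqref{eqn:eq1}.
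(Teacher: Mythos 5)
Your proof is correct and follows precisely the approach the paper itself invokes: the standard root-vertex decomposition of plane trees (citing Flajolet--Sedgewick), combined with the observation that pearl-wise non-negativity localizes to the subtrees, with the $\bl$-pearls forcing excess $\geq 1$ and hence $\sigma_x\in\mathcal{F}^+$. The paper states this "follows immediately" without spelling out details; your write-up simply fills them in.
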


For completeness let us observe that non-negative $\cQ$-trees can also be viewed as a generalization of \emph{parking trees} as studied in \cite{Chen, contat24, contat23,contatcurien21,Pan}, upon considering each $\bt$-pearl as the entry place of a car that seeks a free $\bl$-pearl to park on on its path to the root: it can indeed be checked that a $\cQ$-tree is non-negative if and only if all its cars manage to park (see \emph{e.g.} \cite{henri2025} for a detailed discussion of this link).

\subsection{Plane maps and their spanning trees}\label{sec:map}

\begin{figure}[t]
  \begin{center}
      \includegraphics[scale=.2,page=1]{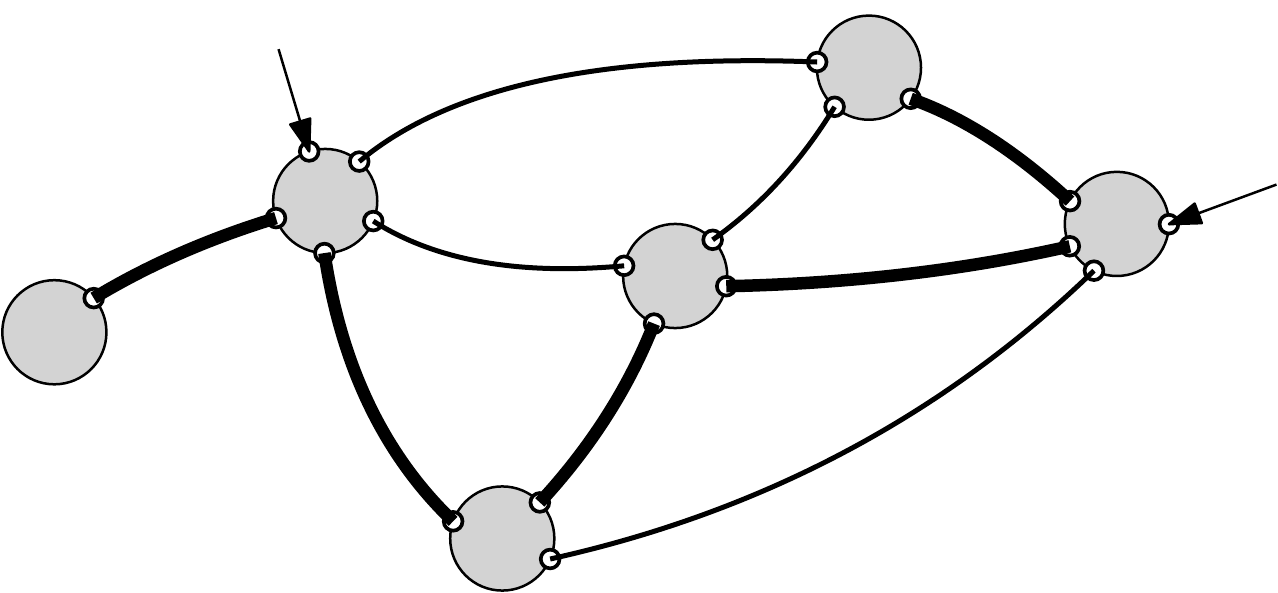}
      \includegraphics[scale=.2,page=2]{opening-closure-with-spanning.pdf}
      \includegraphics[scale=.2,page=3]{opening-closure-with-spanning.pdf}
  \end{center}
  \caption{\label{fig:generic-opening-spanning} (i) A tree-rooted blossoming map with 2 incoming half-edges and 9 edges, (ii) its clockwise orientation, (iii) and its clockwise opening, which is a blossoming tree with 11 incoming and 9 outgoing half-edges.}
\end{figure}

It will prove convenient to describe our bijections graphically, in
terms of non-crossing arc systems built around plane trees, or of
planar maps endowed with spanning trees.  These are standard
combinatorial concepts: the basic definitions and results we rely upon
can be found for instance in \cite[Chap.9] {appliedlothaire} or
\cite[Sec. V.2 and V.3]{chapterMaps}, but we recall here some standard definitions
and results that will be useful in the rest of the text.

A \emph{planar map} is a proper embedding of a connected graph in the
sphere, considered up to orientation-preserving homeomorphisms of the
sphere. A \emph{plane map} is a planar map with a distinguished face,
called the \emph{unbounded face}. Equivalently a plane map can
be viewed as a proper embedding of a connected graph in the plane
considered up to orientation-preserving homeomorphisms of the plane,
with the distinguished face being literally the unbounded one. In
this context, \emph{plane trees} can be described equivalently as
plane maps without cycles, plane maps with one face, or, plane maps
with one more vertices than edges. A \emph{tree-rooted map} is a pair
$(\mu,\tau)$ where $\mu$ is a plane map and $\tau$ is a subset of the
edges of $\mu$ that forms a spanning tree of its vertices, or
equivalently, such that between any two vertices of $\mu$ there is a
unique simple path using only edges of $\tau$. Finally we extend
slightly the definition of plane map to allow for dangling oriented
half-edges in the unbounded face: the resulting decorated maps are
called \emph{blossoming maps}, and in particular \emph{blossoming
trees} when they have only one face. An example of tree-rooted
blossoming map is given in
Figure~\ref{fig:generic-opening-spanning}(i).

The first ingredient we shall need is the \emph{clockwise
(resp. counterclockwise) opening} of a tree-rooted map $(\mu,\tau)$,
which is the blossoming tree obtained from $(\mu,\tau)$ along the following two steps:
\begin{itemize}
\item First orient the edges of $\mu$ that are not in
$\tau$ in clockwise (resp. counterclockwise) direction around $\tau$ \emph{i.e.}
each edge $e$ is oriented so that the unique simple cycle it forms
with edges of $\tau$ has the unbounded region of the plane map $\mu$
on its left (resp. right) hand-side. This operation is illustrated by Figure~\ref{fig:generic-opening-spanning} (ii).
\item Then break each (oriented) edge of $\mu\setminus\tau$ into an
  incoming half-edge and an outgoing half-edge. This operation is illustrated by Figure~\ref{fig:generic-opening-spanning} (iii).
\end{itemize}
As illustrated by Figure~\ref{fig:generic-opening}, the following
proposition is a reformulation in the language of maps of the well
known correspondence between arc diagrams and balanced parenthesis
strings, see \emph{e.g.} \cite[Prop. 6.1.1]{stanley} and \cite[Theorem V.6]{chapterMaps}.
\begin{pro}\label{pro:closure}
  Let $p$, $q$ and $\ell$ be non-negative integers. Then clockwise (resp.
  counterclockwise) opening is a vertex degree preserving bijection
  between
  \begin{itemize}
  \item tree-rooted blossoming maps with $p+1$ vertices, and $p+q$
    edges, and $\ell$ incoming half-edges in the unbounded face (but no
    outgoing half-edges),
  \item and blossoming trees with $p$ edges, and $q$ outgoing and $q+\ell$ incoming half-edges in the unbounded face.
  \end{itemize}
\end{pro}
\begin{figure}[t]
  \begin{center}
      \includegraphics[scale=.17,page=1]{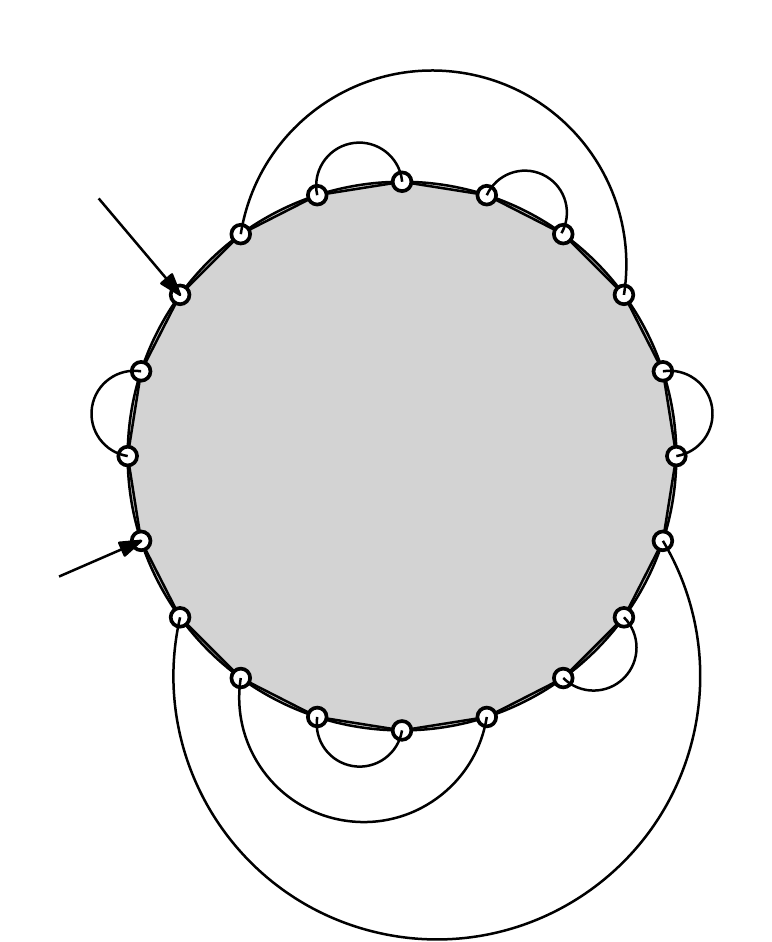}
      \includegraphics[scale=.17,page=2]{opening-closure.pdf}
      \includegraphics[scale=.17,page=3]{opening-closure.pdf}
      \includegraphics[scale=.17,page=4]{opening-closure.pdf}
      \includegraphics[scale=.17,page=5]{opening-closure.pdf}
      \includegraphics[scale=.17,page=6]{opening-closure.pdf}
  \end{center}
  \caption{\label{fig:generic-opening} (i) A one-vertex tree-rooted blossoming map with 2 incoming half-edges and 9 edges (here the spanning tree is reduced to the single vertex, which has been magnified) (ii) its clockwise orientation, (iii) its clockwise opening, a one-vertex blossoming tree with 11 incoming and 9 outgoing half-edges, (iv)-(vi) three intermediary steps in the iterative clockwise closure.}
\end{figure}

The inverse of clockwise (resp. counterclockwise) opening is called
the \emph{clockwise (resp. counterclockwise) closure}.  A standard way
to perform the closure, as illustrated by Figure~\ref{fig:generic-opening}, is by iteration of local matchings: a
\emph{clockwise matching pair} of a blossoming map $\mu$ is a pair
$(x,y)$ where $x$ is an outgoing half-edge and $y$ is an incoming
half-edge such that no other half-edge is met
while following the border of the unbounded face of $\mu$ in clockwise
direction from $x$ to $y$.
\begin{pro}\label{pro:iter}
  The clockwise closure of a blossoming map $\mu$ can be computed by
  iteratively matching clockwise matching pairs until no outgoing
  edges remain.
\end{pro}
Again, this is a mere reformulation of the standard algorithm to match parenthesis in a balanced parenthesis string, as discussed for instance in \cite[Prop. 6.1.1]{stanley}, \cite[Chapter 11]{lothaire}, or again \cite[Theorem V.6]{chapterMaps}.

\subsection{The closure and rewiring of a non-negative $\cQ$-tree} \label{sec:closure}\label{sec:rewiring}
 From now on we view $\mathcal{Q}$-trees as plane
maps with one face and decorated edges and vertices: necklaces are viewed
as vertices and pearls as colored endpoints of edges, and we consider
more generally plane maps with such decorated edges and vertices. In
particular, a plane map is \emph{rooted} if one of its pearl is
distinguished as \emph{the root pearl}.  Observe that the root pearl is in
general not required to be incident to the unbounded face, although
this will often be an interesting case.

Around a non-negative $\cQ$-tree $\tau$, as illustrated by Figure~\ref{fig:Corners}, let
\begin{itemize}
\item a \emph{left $\bl$-corner} be the exterior angular sector following
  a red edge in counterclockwise direction around a $\bl$-pearl,
\item a \emph{$\bt$-corner} be the exterior angular sector around a $\bt$-pearl,
\end{itemize}
and observe that the fact that $\tau$ is non-negative implies that there are at least as many $\bt$-corner
as $\bl$-corners. Define then the ($\bl$-to-$\bt$ clockwise) \emph{closure} $c(\tau)$ of
$\tau$ as the plane map obtained as follows: grow an outgoing half edge in
each left $\bl$-corner of $\tau$, and an incoming half-edge in each $\bt$-corner;
then apply the clockwise closure to the resulting blossoming map to form
new $\bl$-to-$\bt$ clockwise edges, hereafter called \emph{blue} edges
($\pe$); finally remove the extra inserted incoming half-edges (if there were more $\bl$-corners than $\bt$-corners). Equivalently,
in view of Proposition~\ref{pro:iter}, the closure can be constructed by
iteratively matching unmatched left $\bl$-corners that are followed by
an unmatched $\bt$-corner in clockwise direction around the tree, to
form a planar system of non-crossing $\bl$-to-$\bt$ clockwise
edges. This construction, illustrated by Figures~\ref{fig:Corners}
and~\ref{fig:closure}.
\begin{figure}[t]
  \begin{center}
    \begin{minipage}{.5\linewidth}
      \includegraphics[scale=.3,page=4]{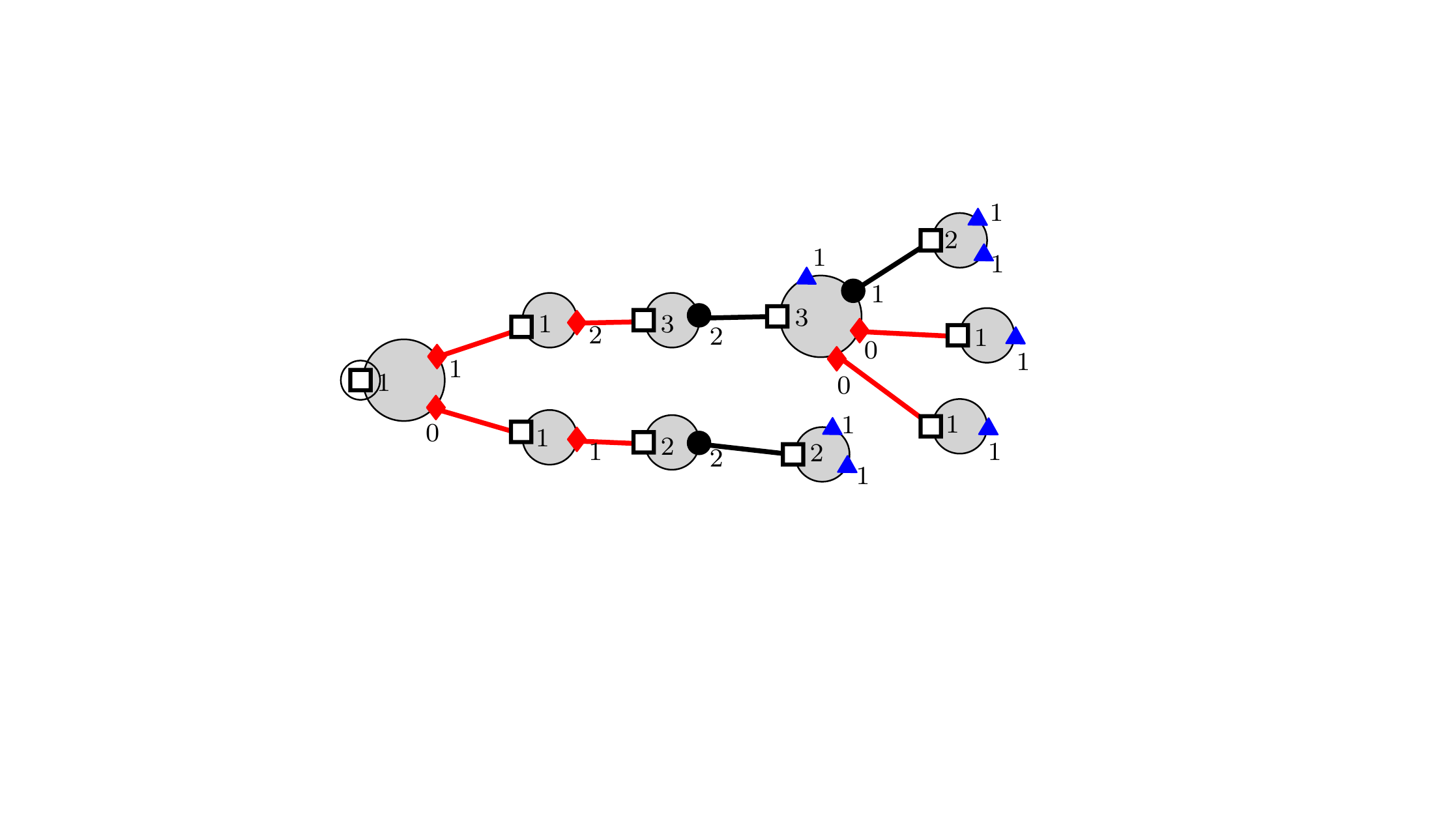}\quad
    \end{minipage}
    \begin{minipage}{.3\linewidth}
      \includegraphics[scale=.3]{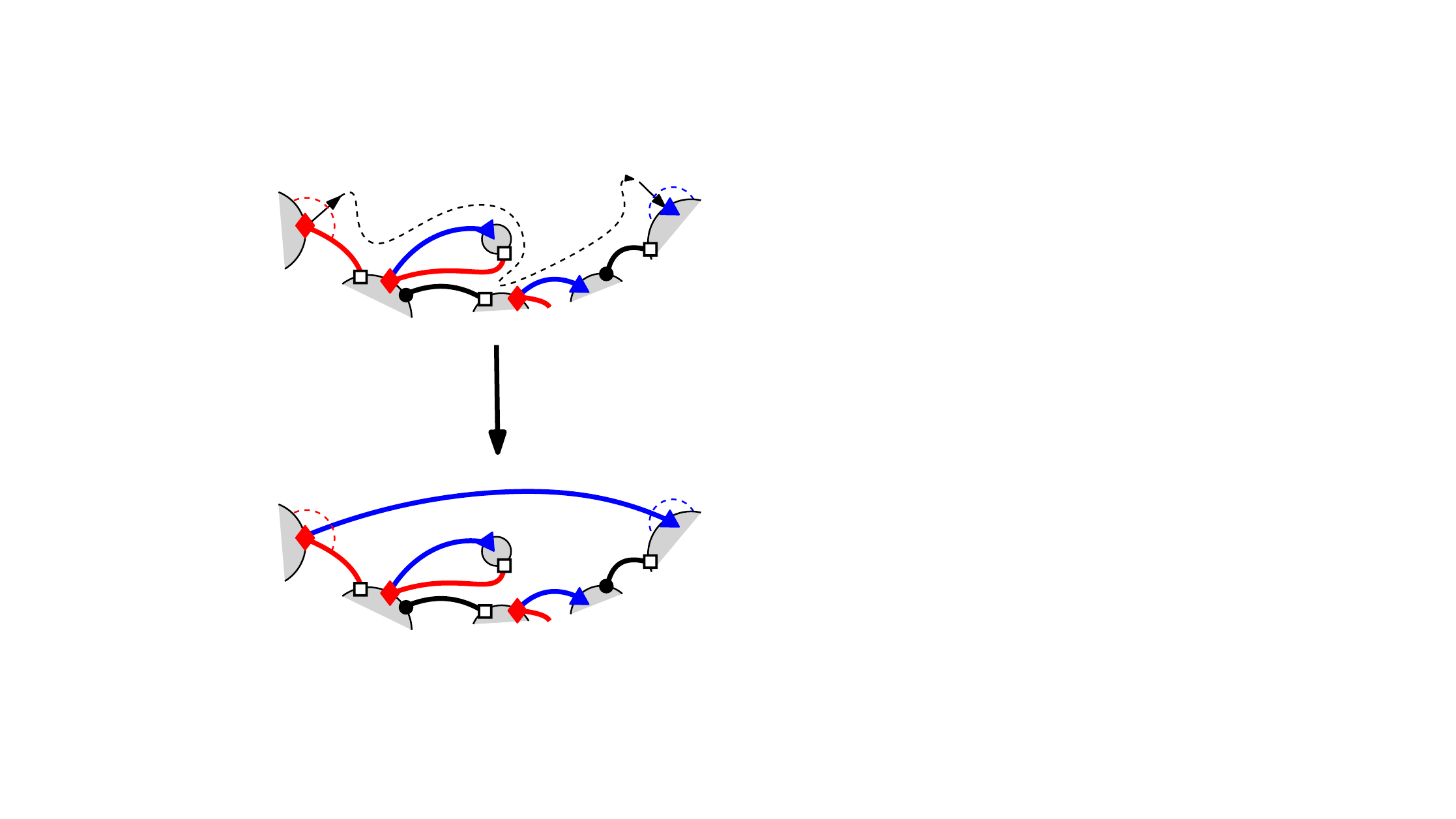}
    \end{minipage}
  \end{center}
  \caption{The left $\bl$-corners and $\bt$-corners of a non-negative $\cQ$-tree, and the
    matching of a left $\bl$-corner with the next available $\bt$-corner in
    clockwise direction.}
  \label{fig:Corners}
\end{figure}

\begin{figure}[t]
  \begin{center}
      \includegraphics[scale=.3,page=5]{Q-all-tree-bit-larger}\quad
      \includegraphics[scale=.3,page=6]{Q-all-tree-bit-larger}
  \end{center}
  \caption{The matchings after first iteration and final result of the
    $\bl$-to-$\bt$ clockwise closure of the tree of
    Figure~\ref{fig:Corners}.}
  \label{fig:closure}
\end{figure}

The \emph{rewiring} $\phi(\tau)$ of a $\cQ$-tree $\tau$ then consists in its closure followed by the removal of red edges, as illustrated by Figure~\ref{fig:rewiring}.

\begin{pro}\label{pro:rewiringtree}
  The rewiring $\phi(\tau)$ of a non-negative $\cQ$-tree $\tau$ is a tree with the same necklaces.
\end{pro}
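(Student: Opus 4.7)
The plan is to verify two things: that $\phi(\tau)$ has the same vertex set with the same necklace labels as $\tau$, and that its edge set forms a tree. The first assertion is immediate, since neither taking the closure nor deleting red edges modifies any vertex or relabels any pearl.

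For the tree structure, I first tally the edges. By Proposition~\ref{pro:lemma}(iii), the non-negativity of $\tau$ forces every $\bl$-pearl to be matched in the closure, so exactly $|\tau|_\bl$ blue edges are added, and then the $|\tau|_\bl$ red edges are deleted. This leaves $|\tau|_\bc+|\tau|_\bl=|\tau|-1$ edges on $|\tau|$ vertices, so it suffices to prove that $\phi(\tau)$ is connected.

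For connectedness I rely on Proposition~\ref{pro:lemma}(ii). Each red edge $e$ of $c(\tau)$ is the unique red edge of exactly one bounded face $F_e$ of $c(\tau)$, namely the bounded face in whose clockwise boundary walk $e$ is traversed from $\ws$ to $\bl$; moreover, the boundary of $F_e$ consists of $e$, one blue edge, and some number (possibly zero) of black edges. Consequently the two endpoints of $e$ are already connected in $\phi(\tau)$ by the walk $\partial F_e\setminus\{e\}$, which uses only black and blue edges. Since $\tau$ spans all vertices of $c(\tau)$, every path of $\tau$ between two vertices can be rerouted in $\phi(\tau)$ by substituting each of its red edges by the corresponding face-boundary detour, so $\phi(\tau)$ is connected and therefore a tree with the same necklaces as $\tau$.

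The step that will require the most care is the claim that distinct red edges yield distinct bounded faces $F_e$, so that the above rerouting is unambiguous. This follows from the orientation aspect of Proposition~\ref{pro:lemma}(ii): if a single red edge $e$ were the unique red edge of two bounded faces, the clockwise walk around one of these two faces would traverse $e$ in the $\bl$-to-$\ws$ direction, contradicting (ii) for that face. Once this bijection between the $|\tau|_\bl$ red edges and the $|\tau|_\bl$ bounded faces of $c(\tau)$ is in place, the connectedness argument above goes through without further effort.
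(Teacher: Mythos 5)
Your global strategy — first counting edges (Proposition~\ref{pro:lemma}(iii) guarantees that every $\bl$-pearl is matched, so $\phi(\tau)$ has exactly $|\tau|-1$ edges on $|\tau|$ vertices) and then proving connectedness — is a perfectly reasonable route, and genuinely different from the paper's, which proceeds by structural induction on the size of $\tau$ using the fact that no closure edge joins two distinct root subtrees. However, your connectedness argument contains a real gap.

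The claim that ``the boundary of $F_e$ consists of $e$, one blue edge, and some number (possibly zero) of black edges'' does not follow from Proposition~\ref{pro:lemma}(ii), which only asserts that the clockwise walk visits \emph{exactly one blue edge in $\bl$-to-$\bt$ direction and one red edge in $\ws$-to-$\bl$ direction}. It says nothing about red edges visited in the opposite $\bl$-to-$\ws$ direction, and indeed these do occur. Take $\tau$ to be the path $v_0 - v_1 - v_2$ with $v_0$ and $v_1$ of type $\bl\bt$ and $v_2$ of type $\bt$, joined by two red edges. The closure adds the blue edges $\bl_1\bt_2$ and then $\bl_0\bt_1$. The face created by $\bl_1\bt_2$ is a digon, but the face created by $\bl_0\bt_1$ — which is $F_e$ for $e = \bl_0\ws_1$ — has degree four: its boundary walk traverses both blue edges and both red edges, the red edge $\bl_1\ws_2$ appearing in the $\bl$-to-$\ws$ direction. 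So $\partial F_e\setminus\{e\}$ is not contained in $\phi(\tau)$, and your rerouting step fails as written. (Your ``care'' paragraph, showing distinct red edges have distinct signature faces, is correct, but it presupposes the false claim that $e$ is the \emph{only} red edge on $\partial F_e$.)

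The argument can be salvaged: the extra red edges on $\partial F_e$ all lie strictly inside the subtree below the $\bl$-pearl of $e$ (since by Proposition~\ref{pro:lemma}(iii) the matched $\bt$-pearl lies in that subtree), so one can induct on depth and reroute recursively, bottoming out at digon faces. But this extra induction must be carried out; as written the proof jumps to the conclusion. The paper avoids the issue entirely by inducting on tree size from the start and showing that the closure factors through the root-vertex decomposition.
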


In order to prove Proposition~\ref{pro:rewiringtree} we first gather some immediate consequences of the definition of the closure in the following lemma:
\begin{lem}\label{lem:lemma}
  The closure $c(\tau)$ of a non-negative $\cQ$-tree $\tau$ is a $\ws$-rooted plane map with vertices in $\cQ$, and black edges ($\be$), red edges ($\re$) and blue edges ($\pe$) such that:
  \begin{itemize}
  \item[(i)] The black and red edges of $\tau$ form a spanning tree of $c(\tau)$ and each blue edge $e$ is $\bl$-to-$\bt$ clockwise around $\tau$, or equivalently, the $\bl$-to-$\bt$ orientation of $e$ orients the unique simple cycle it forms with edges of $\tau$ in the clockwise direction. 
  \item[(ii)] The clockwise walk around each bounded face of $c(\tau)$ visits exactly one blue edge in $\bl$-to-$\bt$ direction and one red edge in $\ws$-to-$\bl$ direction,
    and these two edges share their $\bl$-pearl.
  \item[(iii)] Each $\bl$-pearl $x$ of $\tau$ is matched with a $\bt$-pearl in its subtree $\tau_x$.
  \item[(iv)] All unmatched $\bt$-pearls in $c(\tau)$ lie in its unbounded face.
  \end{itemize}
  The closure is injective and the inverse \emph{opening} construction consists in deleting blue edges.
\end{lem}
\begin{proof}
  Property (i) holds by definition of the $\bl$-to-$\bt$ clockwise closure. For Property (ii)
  observe first that each bounded face $F$ of $c(\tau)$ is created in
  the iterative process by the matching of a last blue edge $\pe$ that
  ``closes'' $F$, so that this edge will be visited in $\bl$-to-$\bt$
  direction by the clockwise walk around $F$, right after visiting the
  incident red edge in $\ws$-to-$\bl$ direction (see
  Figure~\ref{fig:Corners}); observe that no other red (resp. blue)
  edge can be enclosed in $\ws$-to-$\bl$ (resp. $\bl$-to-$\bt$)
  direction otherwise its $\bl$- (resp. $\bt$-) pearl would stand
  between the $\bl$- and $\bt$-endpoints of the closing edge, in
  contradiction with the matching rule that these two pearls have to follow one
  another to be matched.  Property (iii) is an immediate consequence
  of the non-negativity of $\tau$: for the matching edge of a
  $\bl$-pearl $x$ to turn around $\tau_x$ without finding any
  available $\bt$-pearl, one would need all the $\bt$-pearls in
  $\tau_x$ to have been matched by other $\bl$-pearls in $\tau_x$, a
  contradiction with the non-negativity at $x$. Similarly (iv) is an
  immediate consequence of the definition of the local closure, as a
  blue edge cannot enclose an unmatched $\bt$-pearl.

  Since the closure only add blue edges and since all blue edges in
  $c(\tau)$ arise from the closure it is clear that the opening,
  \emph{i.e.} the deletion of blue edges, is the inverse of the
  closure, and that the closure is injective.
\end{proof}

\begin{proof}[Proof of Proposition~\ref{pro:rewiringtree}]
  Observe that there are
  no closure edges between vertices of different subtrees of the root
  vertex of $\tau$ by Property (iii) above. As a consequence, if the
  root vertex of $\tau$ has type $w=x_1\ldots x_k$ with the pearl
  $x_i$ carrying a subtree $\tau_i$ (empty iff $x_i=\bt$), then the
  closure of $\tau$ is the concatenation at a root necklace of type
  $w$ of the rewirings $\phi(x_i-\tau_i)$, where $x_i-\tau_i$ denote the
  tree with root necklace of type $x_i$ with unique subtree
  $\tau_i$.

  The proof then goes by recurrence on the size. If there are $k\geq2$
  non trivial subtrees at the root vertex of $\tau$, then each $x_i-\tau_i$ is a
  smaller tree than $\tau$ and all $\phi(x_i-\tau_i)$ are trees according
  to the recurrence hypothesis. Hence so is $\phi(\tau)$. If instead
  there is only one $\bc$-planted subtree at the root vertex of $\tau$,
  \emph{i.e.} if $\tau=$\raisebox{.5pt}{$\scriptscriptstyle\square$}$\!\!{\fullmoon}\!\!\bc\!\!-\tau_1$, then $\tau_1$ is smaller than $\tau$ and by recursion hypothesis
  $\phi(\tau_1)$ is a tree, so that $\phi(\tau)=$\raisebox{.5pt}{$\scriptscriptstyle\square$}$\!\!{\fullmoon}\!\!\bc\!\!-\phi(\tau_1)$
  is a tree as well.  Finally if $\tau=$\raisebox{.5pt}{$\scriptscriptstyle\square$}$\!\!{\fullmoon}\!\!\bl\!\!\textcolor{red}{-}\tau_1$ then $\phi(\tau)$
  consists of a root node with type $\bl$ with a blue edge attached to
  the first clockwise available $\bt$-pearl on $\phi(\tau_1)$. Since
  $\phi(\tau_1)$ is a tree by recursion hypothesis, so is $\phi(\tau)$.
\end{proof}

\begin{figure}[t]
  \begin{center}
    \begin{minipage}{.44\linewidth}
      \centerline{\includegraphics[scale=.3,page=8]{Q-all-tree-bit-larger}}
    \end{minipage}
    \begin{minipage}{.1\linewidth}
      \[
      \begin{array}{c}
        \phi\\
        \longrightarrow\\
        \longleftarrow\\
        \bar\phi
        \end{array}
      \]
    \end{minipage}
    \begin{minipage}{.44\linewidth}
      \centerline{\includegraphics[scale=.3,page=1]{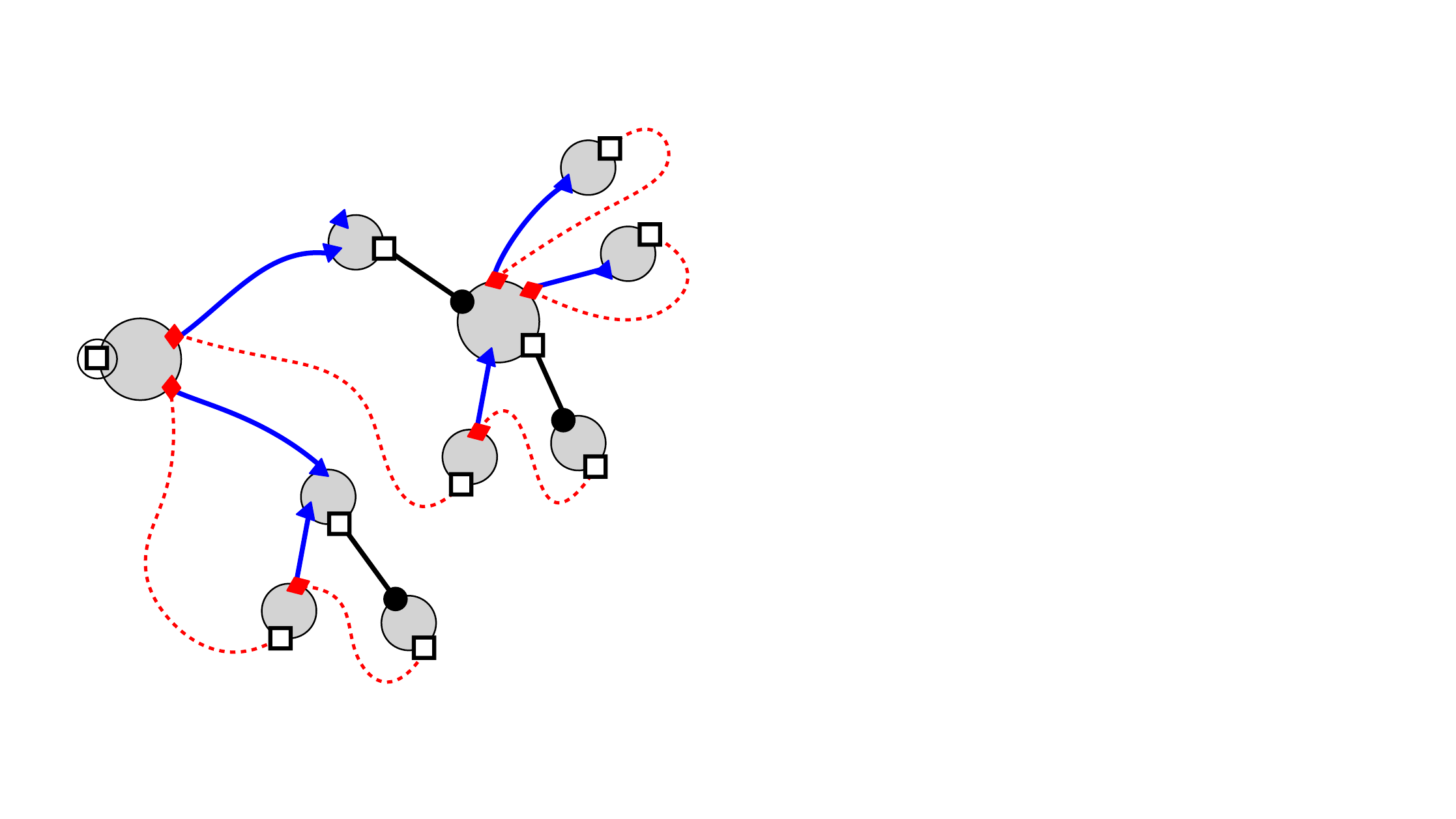}}
    \end{minipage}
  \end{center}
  \caption{The tree $\tau$ of Figure~\ref{fig:Corners} (red and black edges) with its closure edges (dashed blue lines), and its rewiring,  $\phi(\tau)$ (blue and black edges), with the inverse closure edges (dashed red lines). Observe that as plane maps, the two only differ by the dashing of blue versus red edges.}
  \label{fig:rewiring}
\end{figure}

An immediate consequence of Proposition~\ref{pro:rewiringtree} is that for any non-negative $\cQ$-tree $\tau$, the black and blue edges of $c(\tau)$ form a spanning tree and the rewiring $\phi(\tau)$ is the counterclockwise opening of $c(\tau)$ with respect to this spanning tree.

Observe that in the rewiring process some parts of the trees are
rerooted and get ``rotated'' with respect to the root, but the local necklace
structure is preserved: this motivates our choice to represent vertices
as necklaces.

\section{$\cQ$-companion trees and their decompositions}\label{sec:Qcompanion}
\subsection{$\cQ$-companion trees and the main bijection}\label{sec:defQcompanion}

By construction, rewiring replaces each red edge of the form $\re$ by
a blue edge of the form $\pe$ originating from the same $\bl$-pearl,
as illustrated by Figure~\ref{fig:rewiring}. This motivates the
following definition of \emph{rooted $\cQ$-companion trees} as pearl
rooted plane trees with black and blue edges such that
\begin{itemize}
\item each vertex is a copy of a necklace of $Q$,
\item each black edge connects a $\bc$-pearl to a $\ws$-pearl, \emph{i.e.}, takes the form $\be$,
\item each blue edge connects a $\bl$-pearl to a $\bt$-pearl, \emph{i.e.}, takes the form $\pe$,
\item each non-root $\bc$- or $\bl$-pearl is incident to exactly one edge, the
  root pearl is \emph{free} (\emph{i.e.}, incident to no edge) and each $\ws$- or $\bt$-pearl is incident to at most
  one edge, non-root free $\bt$-pearls being referred to as \emph{defects}.
\end{itemize}
As opposed to non-negative $\cQ$-trees which are always $\ws$-rooted,
we will also consider $\bc$-, $\bl$- and $\bt$-rooted $\cQ$-companion
trees. Moreover, we define an \emph{unrooted $\cQ$-companion tree} to be
an equivalence class of $\ws$-rooted $\cQ$-companion trees up to
rerooting: in other terms it is an unrooted plane tree satisfying the
conditions above and without free $\bc$- or $\bl$-pearl (like any
$\ws$-rooted $\cQ$-companion tree, and as opposed to $\bc$- or
$\bl$-rooted $\cQ$-companion trees, whose root is free).

On the one hand, according to Proposition~\ref{pro:rewiringtree}, the
image $\phi(\tau)$ of a non-negative $\cQ$-tree $\tau$ is a tree, on
the other hand it clearly satisfies all the constraints on vertices,
edges and pearls in the definition of $\cQ$-companion trees, so that
it is in fact a $\ws$-rooted $\cQ$-companion tree. However as we shall
see, not all $\ws$-rooted $\cQ$-companion trees can be obtained from
non-negative $\cQ$-trees by rewiring, and we have to study
the inverse construction to characterize the image of $\phi$.

By construction in a $\ws$- or $\bt$-rooted $\cQ$-companion tree
$\tau'$, $|\tau'|_\ws=|\tau'|_\bc+|\tau'|_\bl+1$, and in a $\bc$- or
$\bl$-rooted $\cQ$-companion tree $\tau'$,
$|\tau'|_\ws=|\tau'|_\bc+|\tau'|_\bl$. In particular, this implies
that the number of $\ws$-pearls that are free is equal to the number
of $\bl$-pearls plus one in a $\ws$-, $\bt$-rooted or unrooted $\cQ$-companion
tree, and to the number of $\bl$-pearls in a $\bc$- or $\bl$-rooted
one. This allows us to define, like in Section~\ref{sec:closure}, and as
illustrated by Figures~\ref{fig:rewiring}
and~\ref{fig:internaldefects}, the \emph{inverse ($\bl$-to-$\ws$
counterclockwise) closure} $\bar c(\tau')$ of a 
rooted or unrooted $\cQ$-companion tree $\tau'$ as the plane
map obtained by growing an outgoing half-edge in each right
$\bl$-corner and an incoming half-edge in each $\ws$-corner, 
applying counterclockwise closure, and removing the remaining incoming
half-edges. Equivalently, according to Proposition~\ref{pro:iter},
$\bar c(\tau')$ can be obtained upon matching iteratively right
$\bl$-corners that are followed by an unmatched $\ws$-corner in
counterclockwise direction around the tree to form a planar system of
non-crossing $\bl$-to-$\ws$ counterclockwise edges (\emph{i.e.} red
edges). If $\tau'$ is rooted then so is $\bar c(\tau')$, keeping the
same root pearl.
\begin{figure}[t]
  \begin{center}
    \includegraphics[scale=.3,page=2]{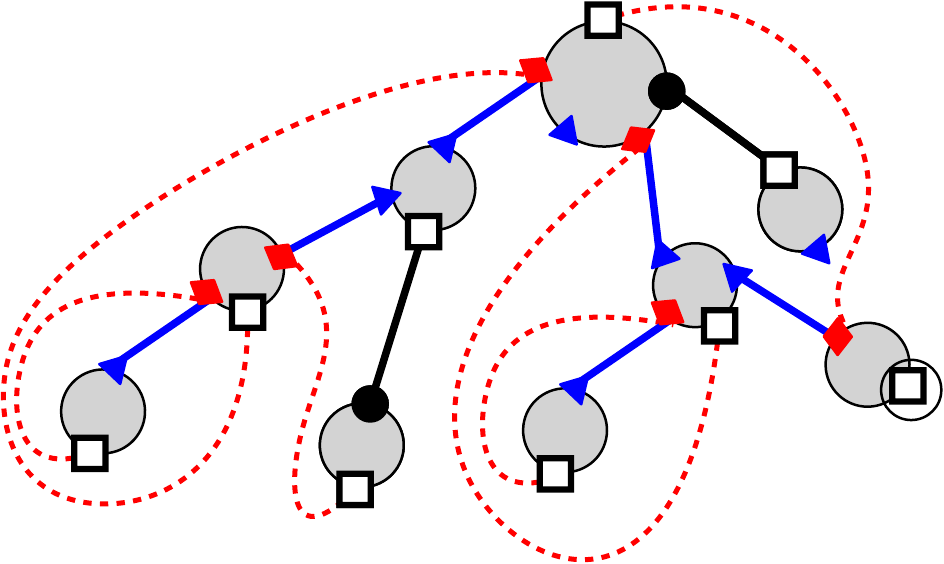}\;
      \includegraphics[scale=.3,page=1]{Q-Com-closure-details}\;
      \includegraphics[scale=.3,page=3]{Q-Com-closure-details}
  \end{center}
  \caption{Three rooted $\cQ$-companion trees with their inverse
    closure edges (dashed red lines), and sharing the same underlying
    unrooted tree (Recall that the root pearl is indicated by
    $\fullmoon$, and that it is not necessarily represented as the top
    vertex). The leftmost tree is $\ws$-rooted and unbalanced, the
    middle one is $\ws$-rooted and balanced, they both have one
    internal and one external defects. The rightmost tree is
    $\bt$-rooted and unbalanced (observe that the root is not
    requested to be incident to the outer-face). It also has no
    internal defects as root $\bt$-pearls do not count among
    defects.}  \label{fig:internaldefects}
\end{figure}

If $\tau'$ is unrooted, or $\ws$- or $\bt$-rooted, then as observed
above its number of free $\ws$-pearls is one more than its number of
$\bl$-pearls, and the plane map $\bar c(\tau')$ has exactly one
unmatched $\ws$-pearl.  A $\ws$-rooted $\cQ$-companion tree $\tau'$ is
\emph{balanced} if it is rooted on this unique $\ws$-pearl that remains
unmatched in its closure $\bar c(\tau')$, \emph{unbalanced} otherwise. A
$\bc$- or $\bt$-rooted $\cQ$-companion tree is \emph{balanced} if its
root pearl remains incident to the unbounded face in its closure,
\emph{unbalanced} otherwise. Finally, $\bl$-rooted
$\cQ$-companion trees are declared \emph{unbalanced} since their root pearl is always matched during the closure.  These definitions
are illustrated by Figure~\ref{fig:internaldefects}.

The \emph{inverse rewiring} $\bar\phi(\tau')$ of a balanced
$\ws$-rooted $\cQ$-companion tree $\tau'$ is obtained from $\bar
c(\tau')$ by removing the blue edges. Recall that the defects of a
$\cQ$-companion tree are its non-root free $\bt$-pearls. A defect in a
$\ws$-rooted $\cQ$-companion tree $\tau'$ is said to be
\emph{external} (resp. \emph{internal}) if it lies in the unbounded
face (resp. in an inner face) of the inverse closure $\bar c(\tau')$
of $\tau'$.

\medskip
Our main combinatorial result is the following theorem:
\begin{thm}\label{thm:main}
  Rewiring and inverse rewiring are necklace-preserving bijections between
  \begin{itemize}
  \item the class $\mathcal{F}_k$ non-negative $\cQ$-trees with excess $k\geq 0$,
  \item and the class $\mathbf{B}_k$ of balanced $\ws$-rooted $\cQ$-companion trees with $k$ external defects and no internal defects.
  \end{itemize}
\end{thm}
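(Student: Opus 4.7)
The plan is to introduce as central tool a commutation lemma between closure and inverse closure, namely $\bar c(\phi(\tau)) = c(\tau)$ for every non-negative $\cQ$-tree $\tau$ and, dually, $c(\bar\phi(\tau')) = \bar c(\tau')$ for every $\tau' \in \mathbf{B}_k$. Because the opening (deletion of blue edges) is the inverse of the closure by Proposition~\ref{pro:lemma}, and analogously the deletion of red edges is the inverse of the inverse closure, these two identities will immediately force $\phi \circ \bar\phi = \mathrm{id}_{\mathbf{B}_k}$ and $\bar\phi \circ \phi = \mathrm{id}_{\mathcal{F}_k}$, so the theorem reduces to checking that $\phi$ and $\bar\phi$ land in the correct classes.

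To prove the first identity, I would proceed by structural induction on $|\tau|$ along the catalytic decomposition of Proposition~\ref{pro:cata}: $\tau$ consists of a root necklace $s \in \cQ$ carrying a non-negative subtree at each $\bc$-pearl and a positive-excess subtree at each $\bl$-pearl. The main geometric input is Proposition~\ref{pro:lemma}(ii): each red edge $\re$ and the blue edge $\pe$ sharing its $\bl$-pearl in $c(\tau)$ bound a common bounded face, with the red edge running $\ws$-to-$\bl$ and the blue edge $\bl$-to-$\bt$ clockwise around that face. After deleting red edges to form $\phi(\tau)$, running the $\bl$-to-$\ws$ counterclockwise matching from the right corner of that blue edge visits the freed $\ws$-pearl as its first available $\ws$-corner, recovering exactly the erased red edge. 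Since, by Proposition~\ref{pro:lemma}(iii) and the proof of Proposition~\ref{pro:rewiringtree}, the blue closure edges stay inside the subtrees of the root vertex, the inductive hypothesis applied to each root subtree glues into the desired identity for $\tau$. The dual identity $c(\bar\phi(\tau')) = \bar c(\tau')$ is argued symmetrically.

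Granted these commutations, membership $\phi(\tau) \in \mathbf{B}_k$ is immediate: $\phi(\tau)$ is a $\ws$-rooted $\cQ$-companion tree by Proposition~\ref{pro:rewiringtree}; it is balanced because the root $\ws$-pearl of $\tau$ is the unique free $\ws$-pearl of $c(\tau) = \bar c(\phi(\tau))$ (every other $\ws$-pearl carries a parent black or red edge in $\tau$); it has no internal defects by Proposition~\ref{pro:lemma}(iv); and its external defect count equals the number of free $\bt$-pearls, which is $|\tau|_\bt - |\tau|_\bl = \mathrm{exc}(\tau) = k$. Conversely, for $\tau' \in \mathbf{B}_k$, the balanced and no-internal-defect hypotheses are exactly what make $\bar c(\tau')$ satisfy the analogs of Proposition~\ref{pro:lemma}(iii)--(iv), and hence make $\bar\phi(\tau')$ a connected plane tree with non-negative excess at every pearl and root excess $k$.

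The main obstacle will be making the inductive step of the commutation lemma fully rigorous: although Proposition~\ref{pro:lemma}(ii) provides the correct local picture at each red-blue face, one has to track carefully how the two iterative matching procedures interleave and verify that they yield the same global pairing. This is cleanest to do by inducting on the order in which $\bl$-corners are processed, using the non-crossing property of both matchings to reduce each step to the local red-blue face identification above.
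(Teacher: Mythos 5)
Your overall plan closely mirrors the paper's: the first half of the argument, namely $\bar c(\phi(\tau))=c(\tau)$ (proved via Proposition~\ref{pro:lemma}(ii) and the standard theory of tree-rooted maps), is exactly how the paper proves $\bar\phi\circ\phi=\mathrm{id}$ in Proposition~\ref{pro:rewiring}, except that the paper invokes the blossoming-tree/tree-rooted map correspondence as a black box rather than reproving it inductively as you propose. Your derivation of $\phi(\tau)\in\mathbf{B}_k$ from the commutation and Proposition~\ref{pro:lemma}(iii)--(iv), and the defect count $|\tau|_\bt-|\tau|_\bl=k$, is the same calculation the paper makes.

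The gap is in the second direction, where you write that the dual commutation $c(\bar\phi(\tau'))=\bar c(\tau')$ is ``argued symmetrically'' and that balance and absence of internal defects ``make $\bar\phi(\tau')$ a connected plane tree with non-negative excess at every pearl.'' The symmetry is deceptive: to even state $c(\bar\phi(\tau'))$ you must first know that $\bar\phi(\tau')$ is a non-negative $\cQ$-tree, and this does not follow from the analogues of Proposition~\ref{pro:lemma}(iii)--(iv) alone — those concern the matching, not the connectivity of what remains after deletion of the blue edges. The paper's Proposition~\ref{pro:inverserewiring} handles this by a separate structural induction: it classifies the subtree types at the root vertex of a balanced $\ws$-rooted $\cQ$-companion tree (a $\bc$-pearl carries a $\ws$-rooted subtree, a $\bl$-pearl a $\bt$-rooted one, a $\bt$-pearl a $\bl$-rooted one), shows via balance that no $\bt$-pearl can appear at the root and that each subtree is itself balanced after appropriate rerooting, and then recurses. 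This asymmetric case analysis — which mirrors Proposition~\ref{pro:rewiringtree} but is genuinely different because the subtree types of a $\cQ$-companion tree are more varied than those of a $\cQ$-tree — is the content your proposal is missing. Without it, the claim that $\bar\phi(\tau')$ is a tree (and hence that your dual commutation even makes sense) remains unproved, so the reduction to the commutation lemma is circular on this side.
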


This result immediately follows from Propositions~\ref{pro:rewiring}
and~\ref{pro:inverserewiring} below. Again we start by giving an
equivalent of Lemma~\ref{lem:lemma} that gathers some immediate
properties of the closure $\bar c$.

\begin{lem}\label{lem:lemma2} The closure $\bar c(\tau')$ of a $\cQ$-companion tree $\tau'$ is a planar map such that:
  \begin{itemize}
  \item[(i)] the black and blue edges of $\tau'$ form a spanning tree of $\bar c(\tau')$ and each red edge is  $\bl$-to-$\ws$ counterclockwise around $\tau'$,
  \item[(ii)] the bounded faces of $\bar c(\tau')$ satisfy the same property as in Lemma~\ref{lem:lemma}(ii),
  \item[(iii)] if there is a non-negative $\cQ$ tree $\tau$ such that $\tau'=\phi(\tau)$ then $\bar c(\tau')=c(\tau)$,
  \end{itemize}
\end{lem}
\begin{proof}
  Lemma~\ref{lem:lemma2}(i) follows from Proposition~\ref{pro:closure},  and Lemma~\ref{lem:lemma2}(ii) has the same proof as Lemma~\ref{lem:lemma}(ii). Lemma~\ref{lem:lemma2}(iii) follows from the observation that $\phi$ is the composition of a clockwise closure and a counterclockwise opening whose inverse is $\bar c$.
\end{proof}

\begin{pro}\label{pro:rewiring}
The rewiring of a non-negative $\cQ$-tree $\tau$ with excess $k$ is a
balanced $\ws$-rooted $\cQ$-companion tree without internal defects with $k$ external defects,
and $\bar \phi(\phi(\tau))=\tau$.
\end{pro}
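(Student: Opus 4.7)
By Proposition~\ref{pro:rewiringtree}, $\phi(\tau)$ is a plane tree carrying the same necklaces as $\tau$. The first step is to verify that $\phi(\tau)$ satisfies the incidence axioms of a $\ws$-rooted $\cQ$-companion tree. The vertex types, the black edge shape $\be$ and the blue edge shape $\pe$ are preserved by construction. The remaining incidence conditions follow from Proposition~\ref{pro:lemma}(iii): non-negativity of $\tau$ guarantees that every $\bl$-pearl of $\tau$ is matched in $c(\tau)$ by a blue edge, so after removing the red edges each non-root $\bl$-pearl is incident to exactly its blue edge; each non-root $\bc$-pearl retains its unique black edge; each $\ws$-pearl that was the $\ws$-endpoint of a removed red edge becomes free, while the others remain incident to their unique black edge; and each $\bt$-pearl is either free or incident to exactly its blue edge.

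The heart of the proof is to establish that $\bar c(\phi(\tau))=c(\tau)$ as plane maps with colored edges. Once this identity is in hand, removing the blue edges from $\bar c(\phi(\tau))=c(\tau)$ returns $\tau$, so $\bar\phi(\phi(\tau))=\tau$. Moreover, Proposition~\ref{pro:lemma}(iv) translates directly into the absence of internal defects of $\phi(\tau)$; Proposition~\ref{pro:lemma}(iii) combined with $|\tau|_\bt-|\tau|_\bl=k$ yields exactly $k$ unmatched $\bt$-pearls, all lying in the outer face and hence counted as external defects; and since every $\bl$-pearl of $\tau$ is incident to a unique red edge whose other endpoint is a $\ws$-child, the $\ws$-root of $\tau$ is the unique unmatched $\ws$-pearl in $c(\tau)$, so $\phi(\tau)$ is balanced.

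To prove $\bar c(\phi(\tau))=c(\tau)$ I would proceed by induction on $|\tau|_\bl$, peeling off one innermost bounded face at a time. By Proposition~\ref{pro:lemma}(ii), any innermost bounded face $F$ of $c(\tau)$ is bounded by a blue edge $b$, a red edge $r$ sharing a $\bl$-pearl $x$ with $b$, and a tree walk from the $\bt$-endpoint of $b$ to the $\ws$-endpoint of $r$; by the innermost assumption this walk carries no other unmatched $\bl$- or $\ws$-corner in $\phi(\tau)$. Consequently, in $\phi(\tau)$, the right $\bl$-corner at $x$ is followed counterclockwise by precisely the $\ws$-corner at the other endpoint of $r$, so the first step of the inverse closure $\bar c$ reinserts exactly the edge $r$. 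Removing the matched pair $(r,b)$ reduces both $c(\tau)$ and $\phi(\tau)$ to a smaller instance satisfying the same hypotheses, and induction concludes.

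The main obstacle lies in this last step: rigorously tracking the cyclic order of $\ws$-, $\bl$- and $\bt$-corners around $\phi(\tau)$ before and after each reinsertion, so as to certify that the clockwise closure $c$ and the counterclockwise inverse closure $\bar c$ process the available corners in opposite orders and therefore recover the same edge set. The existence of an innermost bounded face (needed to start the induction) also deserves a brief justification via a planarity argument, using the non-crossing character of the closure edges guaranteed by Proposition~\ref{pro:lemma}.
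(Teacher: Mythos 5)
Your architecture mirrors the paper's: everything indeed reduces to the map identity $\bar c(\phi(\tau))=c(\tau)$, from which $\bar\phi(\phi(\tau))=\tau$ follows by removing blue edges, balancedness follows by observing that the red edges of $c(\tau)$ match every free $\ws$-pearl of $\phi(\tau)$ except the root, the absence of internal defects follows from Proposition~\ref{pro:lemma}(iv), and the count $k$ of external defects follows from (iii) together with $|\tau|_\bt-|\tau|_\bl=k$. All of these downstream deductions in your proposal are correct.

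The divergence is in how the identity $\bar c(\phi(\tau))=c(\tau)$ is certified. The paper does \emph{not} reprove it: it observes that in the plane map $c(\tau)$ the tree $\phi(\tau)$ is a spanning tree whose complementary (red) edges are, by Proposition~\ref{pro:lemma}(ii), all oriented $\bl$-to-$\ws$ counterclockwise around $\phi(\tau)$; the standard bijection between blossoming trees and tree-rooted maps (the cited \cite[Thm~6]{chapterMaps}) then says exactly that deleting the red edges is the opening for $\bar c$, i.e.\ $\bar c(\phi(\tau))=c(\tau)$. Your proposal instead reproves that black-box theorem from scratch by induction on $|\tau|_\bl$, peeling off innermost bounded faces. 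This is a reasonable strategy, but you correctly identify that the heart of the inductive step -- certifying that the innermost face can be isolated and that the cyclic order of $\ws$-, $\bl$- and $\bt$-corners is preserved after removing the matched pair $(r,b)$ -- is nontrivial and you leave it unproved. That is a genuine gap in the write-up as it stands: the claim that an innermost face has ``no other unmatched $\bl$- or $\ws$-corner'' on its boundary walk is plausible but needs an actual planarity argument, and one must also show the clockwise closure of the reduced tree produces the reduced map, which requires tracking how corners merge when the face is contracted.

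In short: same skeleton, but the crux that the paper dispatches by citing the blossoming-tree/tree-rooted-map correspondence is the one step your proposal leaves incomplete. If you want a self-contained argument, I would suggest either finishing the innermost-face induction with the corner bookkeeping spelled out, or (simpler) importing the cited theorem directly, checking via Proposition~\ref{pro:lemma}(i)--(ii) that the red edges of $c(\tau)$ satisfy its orientation hypothesis relative to the spanning tree $\phi(\tau)$.
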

\begin{proof}
In view of Proposition~\ref{pro:rewiringtree} and the definition of
$\cQ$-companion trees, it is immediate that the rewiring of a
non-negative $\cQ$-tree $\tau$ with excess $k$ is a $\ws$-rooted
$\cQ$-companion tree $\tau'$ with $k$ defects.  Moreover the deletion
of the blue edges identifies with the opening associated with the
inverse closure $\bar c$ (Proposition~\ref{pro:closure}): in other
terms, inverse closure allows us to recover the blue edges that are
deleted in rewiring, that is $c(\tau)=\bar c(\tau')$. Hence the final
property, as well as the fact that $\tau'$ is balanced since the root
of $\tau'$ is the only free $\ws$-pearl in $c(\tau)$, and in view of
Lemma~\ref{lem:lemma}(iv) $\tau'$ has no internal defects.
\end{proof}

\begin{pro}\label{pro:inverserewiring}
The inverse rewiring $\bar \phi(\tau')$ of a balanced $\ws$-rooted $\cQ$-companion tree $\tau'$ with $k$ external defects and no internal defects is a non-negative $\cQ$-tree with excess $k$, and $\phi(\bar\phi(\tau'))=\tau'$.
\end{pro}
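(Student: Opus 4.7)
The plan is to mirror the proof of Proposition~\ref{pro:rewiring}, establishing an inverse-closure analog of Proposition~\ref{pro:lemma} and then invoking the standard closure/opening duality for spanning trees of plane maps. I would proceed in three main steps: first, analyze the plane map $\bar c(\tau')$; second, verify that the graph $\bar\phi(\tau') = \bar c(\tau') \setminus \{\text{blue edges}\}$ satisfies the pearl-incidence and connectedness conditions of a rooted $\cQ$-tree; third, use the duality to deduce non-negativity, the excess formula, and the inverse identity $\phi(\bar\phi(\tau')) = \tau'$.

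The first step relies on the counting identity $|\tau'|_\ws = |\tau'|_\bc + |\tau'|_\bl + 1$ already noted in Section~\ref{sec:defQcompanion}: since each $\bc$-pearl is matched with a $\ws$-pearl by a black edge, there are exactly $|\tau'|_\bl + 1$ free $\ws$-pearls, so the iterative counterclockwise $\bl$-to-$\ws$ matching succeeds for every $\bl$-pearl and leaves exactly one unmatched $\ws$-pearl, which by the balance hypothesis is the root. An argument parallel to Proposition~\ref{pro:lemma}(ii) shows that each bounded face of $\bar c(\tau')$ is closed by one red edge sharing its $\bl$-pearl with a blue edge; the ``no internal defects'' hypothesis then places all unmatched $\bt$-pearls in the outer face. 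The verification that $\bar\phi(\tau')$ is a rooted $\cQ$-tree is then a routine incidence check paired with a connectedness argument, since each removed blue edge is flanked inside a common bounded face by a red edge, so that the two endpoints of any deleted blue edge remain connected through this red edge, and the resulting graph has $n-1$ edges on $n=|\tau'|$ vertices.

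The final step applies the forward closure $c$ to $\bar\phi(\tau')$. By the standard closure/opening duality for spanning trees of plane maps (see \cite[Thm~6]{chapterMaps}), the clockwise $\bl$-to-$\bt$ matching in $\bar\phi(\tau')$ reconstructs exactly the blue edges of $\tau'$, so that $c(\bar\phi(\tau')) = \bar c(\tau')$ as plane maps. Every $\bl$-pearl of $\bar\phi(\tau')$ is then matched with a $\bt$-pearl in its own subtree, and the converse direction of the reasoning in Proposition~\ref{pro:lemma}(iii) yields non-negativity; the excess equals the number of $\bt$-pearls left unmatched in the outer face, which is $k$ by the external-defect hypothesis; and $\phi(\bar\phi(\tau')) = c(\bar\phi(\tau')) \setminus \{\text{red edges}\} = \bar c(\tau') \setminus \{\text{red edges}\} = \tau'$. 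The main obstacle is precisely this appeal to duality: establishing $c(\bar\phi(\tau')) = \bar c(\tau')$ in a self-contained way requires an induction showing that the clockwise and counterclockwise iterative matchings produce the same pairing between $\bl$- and $\bt$-pearls, and this is the step in which both hypotheses on $\tau'$ --- balance and absence of internal defects --- are crucially used to rule out stray unmatched pearls in bounded faces that would otherwise break the duality.
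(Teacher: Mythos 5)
Your approach differs genuinely from the paper's. You propose a global map-theoretic argument mirroring the forward direction of Proposition~\ref{pro:rewiring}: analyze the plane map $\bar c(\tau')$, delete the blue edges, and invoke the closure/opening duality to identify $c(\bar\phi(\tau'))$ with $\bar c(\tau')$. The paper instead runs a structural recursion on the size of $\tau'$. Its key lemma, absent from your proposal, is that every subtree hanging off the root vertex $s$ of a balanced $\tau'$ is itself balanced (the first unbalanced one in clockwise order after the root pearl would propagate its imbalance up to $\tau'$); this immediately rules out $\bt$-pearls on $s$ and allows a recursion through the $\bc$- and $\bl$-pearl cases, the $\bl$-case being the delicate one, in which the blue edge at $s$ is traded for a red edge onto the rerooting $\tau''_{x'}$ certified balanced by the lemma. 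Your route instead shifts all of the combinatorial burden onto the single identity $c(\bar\phi(\tau'))=\bar c(\tau')$, and you rightly flag this as the main obstacle: it is a genuine statement about the two closure orientations producing the same map, in which balance guards the root pearl and absence of internal defects prevents the clockwise rematching of a $\bl$-pearl to a stray $\bt$-pearl inside a bounded face; filling this gap self-containedly (rather than appealing loosely to \cite[Thm~6]{chapterMaps}) would require an induction comparable in effort to the paper's, and your connectedness argument for $\bar\phi(\tau')$ would also need the observation that each blue edge separates a single bounded face from the unbounded one. What your approach buys is symmetry and conceptual transparency --- the two directions of Theorem~\ref{thm:main} become manifestly dual, with $c$ and $\bar c$ playing interchangeable roles --- whereas the paper's recursion is more elementary, keeping the orientation bookkeeping local to the root vertex at each step at the price of a visible case analysis.
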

\begin{proof}
  Let us first classify the possible subtrees $x-\tau''$ at a pearl $x$ of the root vertex $s$ of a balanced $\ws$-rooted $\cQ$-companion tree $\tau'$:
  \begin{itemize}
  \item If $x=\bc$ then $\tau''$ is a $\ws$-rooted $Q$-companion tree. 
  \item If $x=\bl$ then $\tau''$ is a $\bt$-rooted $Q$-companion tree. 
  \item If $x=\bt$ then $\tau''$ is a $\bl$-rooted $Q$-companion tree. 
  \end{itemize}
  Consider now the subtrees at $s$ in clockwise order starting after
  the root $\ws$-pearl, and let $x-\tau''$ be the first one such that
  $\tau''$ is unbalanced: in the closure of $\tau'$ the unbalance of
  $\tau''$ would cause $\tau'$ to be unbalanced, leading to a
  contradiction (in particular, when $\tau''$ is $\bl$-rooted, the root
  pearl of $\tau''$ would cause the imbalance of $\tau'$).  Hence all
  subtrees at $s$ are balanced:
  \begin{itemize}
  \item There is no $\bt$-pearl on $s$.
  \item Each $\bc$-pearl on $s$ carries a
    balanced $\ws$-rooted subtree $\tau''$.
  \item Each  $\bl$-pearl $x$ on $s$ matches by $\bar c$ a
  $\ws$-pearl $x'$ in the corresponding subtree $\tau''$ such that
  $\tau''_{x'}$, the tree obtained from $\tau''$ by rerooting at $x'$
  is a balanced $\ws$-rooted $Q$-companion tree. 
  \end{itemize}
  The proof that $\bar\phi(\tau')$ is a tree and that $\phi(\bar\phi(\tau'))=\tau'$ is then again by
  recurrence on the size.  If $\tau'$ is reduced to a root vertex
  without edges, or if there are at least two non empty subtrees, the
  result is immediate. It remains only to deal with the case where there is one non empty subtree:
  \begin{itemize}
  \item In the $\bc$-pearl case,
  $\bar\phi($\raisebox{.5pt}{$\scriptscriptstyle\square$}$\!\!{\fullmoon}\!\!\bc\!\!-\tau'')=$
  \raisebox{.5pt}{$\scriptscriptstyle\square$}$\!\!{\fullmoon}\!\!\bc\!\!-\bar\phi(\tau'')$
  where $\tau''$ is a balanced $\ws$-rooted $Q$-companion tree and
  $\bar\phi(\tau'')$ is by recurrence hypothesis a non-negative
  $Q$-trees so that $\bar\phi($\raisebox{.5pt}{$\scriptscriptstyle\square$}$\!\!{\fullmoon}\!\!\bc\!\!-\tau'')$ is.
  \item In the $\bl$-pearl case, the blue edge at $s$ is replaced via
    $\bar\phi$ by a red edge which reconnects $s$ to the root $x'$ of
    the non-negative $Q$-tree preimage $\bar\phi(\tau''_{x'})$ of the
    balanced $\ws$-rooted $Q$-companion tree $\tau''_{x'}$:
    $\bar\phi($\raisebox{.5pt}{$\scriptscriptstyle\square$}$\!\!{\fullmoon}\!\!\bc\!\!-\tau'')$
    is a non-negative $Q$-tree. This case is illustrated by the unique
    subtree of the root of the middle tree in
    Figure~\ref{fig:internaldefects}, or by both subtrees of the
    $\cQ$-companion tree of Figure~\ref{fig:rewiring}.
  \end{itemize}
  In each case the fact that the inverse construction is $\phi$ derives from the recurrence hypothesis using the same case analysis as above. 
\end{proof}

\subsection{Balanced, unrooted and rooted $\cQ$-companion trees without defects}\label{sec:unrooted}
Let us now relate bijectively the family $\mathbf{B}_0$ of balanced $\ws$-rooted
$\cQ$-companion trees without defects to the family $\mathbf C$ of
unrooted $\cQ$-companion trees without defects, and to the various
families $\mathbf{C}_\ws$, $\mathbf{C}_\bc$, $\mathbf{C}_\bl$ and
$\mathbf{C}_\bt$ of $\ws$-, $\bc$-, $\bl$- and $\bt$-rooted
$\cQ$-companion trees without defects (without the requirement of being
balanced).
\begin{figure}[t]
\centerline{    \includegraphics[scale=.3,page=7]{Q-Com-closure-details}}
  \caption{An unbalanced $\cQ$-companion tree without defect and the corresponding pair of $\bl$- and $\bt$-rooted $\cQ$-companion trees without defects.\label{fig:unbalanced}}
\end{figure}
\begin{thm}\label{thm:un-balanced}
  There are necklace-preserving bijections between
  \begin{itemize}
  \item the class $\mathbf{B}_0$ of balanced $\ws$-rooted $\cQ$-companion trees without defects,
  \item and the class $\mathbf{C}$ of unrooted $\cQ$-companion trees without defects,
  \end{itemize}
  and between
  \begin{itemize}
  \item the class $\mathbf{U}_0$ of unbalanced $\ws$-rooted $\cQ$-companion trees without defects,
  \item and the class $\mathbf{C}_\bl\times\mathbf{C}_\bt$ of pairs
    made of a $\bl$-rooted $\cQ$-companion tree and a $\bt$-rooted
    $\cQ$-companion tree, both without defects,
  \end{itemize}
\end{thm}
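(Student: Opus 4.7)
The plan is to treat both bijections together, exploiting the fact that the inverse closure $\bar c$ extends canonically from rooted to unrooted $\cQ$-companion trees: the set of red arches it produces depends only on the cyclic sequence of $\bl$- and free $\ws$-corners along the tree boundary, and is the unique non-crossing matching of $\bl$-corners with $\ws$-corners in that cyclic word. I first record a short pearl/edge count: a $\cQ$-companion tree without defects carries $|\bl|$ $\bl$-corners and $|\bl|+1$ free $\ws$-corners along its boundary, unless it is $\bl$-rooted, in which case both counts equal $|\bl|$. By a cycle lemma, in the former case exactly one $\ws$-pearl is left unmatched by $\bar c$, its position being canonically determined by the planar structure.

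For the first bijection $\mathbf{B}_0 \leftrightarrow \mathbf{C}$, the forward direction simply forgets the root, producing an unrooted companion tree without defects (no free $\bc$- or $\bl$-pearl is introduced since the root was $\ws$-typed, and no $\bt$-defect appears for the same reason). The backward direction performs the canonical $\bar c$ on the unrooted tree and roots at the unique unmatched $\ws$-pearl. These maps are mutually inverse essentially by definition of balancedness, since a $\ws$-rooted tree belongs to $\mathbf{B}_0$ precisely when its root coincides with that unmatched $\ws$-pearl, and rerooting leaves $\bar c$ unchanged. This part should pose no difficulty.

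For the second bijection $\mathbf{U}_0 \leftrightarrow \mathbf{C}_\bl \times \mathbf{C}_\bt$, the forward map proceeds as follows. Given $\tau' \in \mathbf{U}_0$ with root $r$, unbalancedness forces $r$ to be matched in $\bar c(\tau')$ with some $\bl$-pearl $x$, which in turn carries a blue edge of $\tau'$ to some $\bt$-pearl $y$. I would delete this blue edge $xy$: the tree splits into two components $T_x$ (containing $x$) and $T_y$ (containing $y$), which I root at $x$ and $y$ respectively to produce a pair in $\mathbf{C}_\bl \times \mathbf{C}_\bt$. The absence of defects in $\tau'$ together with the fact that $x$ and $y$ become the new roots of their pieces ensures that neither $T_x$ nor $T_y$ has defects. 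Conversely, from a pair $(\tau_1,\tau_2)$ with roots $x$ and $y$, I would glue them by a new blue edge $xy$, invoke the pearl count and cycle lemma to locate the unique $\ws$-pearl $r$ matched with $x$ in the inverse closure of the glued tree, and root there; a case analysis parallel to Proposition~\ref{pro:inverserewiring} then shows that these two maps are mutually inverse.

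The main obstacle, and the only point requiring more than routine verification, is to justify in the forward map that the component $T_y$ (rather than $T_x$) contains the original root $r$, so that the two rooted pieces really do land in $\mathbf{C}_\bl \times \mathbf{C}_\bt$ and not with the roles of $x$ and $y$ accidentally swapped. This amounts to proving an analogue of Proposition~\ref{pro:lemma}(ii) for the inverse closure of an unbalanced companion tree: the bounded face of $\bar c(\tau')$ incident to the red arch $rx$ must also contain the blue edge $xy$ as its unique blue edge, with the remainder of its boundary being a tree path from $y$ back to $r$ not using $xy$. A careful local argument around $x$, where the red arch lives at the right $\bl$-corner while the blue edge lives at the left one, combined with the non-crossing property of the full arch system, should deliver this statement, after which all remaining inverse checks reduce to the same recursive case analysis already performed for Proposition~\ref{pro:inverserewiring}.
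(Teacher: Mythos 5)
Your constructions for both bijections coincide with the paper's: forget the root versus root at the unique unmatched $\ws$-pearl for $\mathbf{B}_0\leftrightarrow\mathbf{C}$, and for $\mathbf{U}_0\leftrightarrow\mathbf{C}_\bl\times\mathbf{C}_\bt$, break the blue edge incident to the $\bl$-pearl matched with the root and root the two pieces at its endpoints, inverting by gluing and rooting at the $\ws$-pearl matched with the $\bl$-pearl; this is exactly the paper's proof. The ``main obstacle'' you flag, however, is a non-issue: which component contains the original root $r$ is irrelevant, since $T_x$ is rooted at the $\bl$-pearl $x$ and $T_y$ at the $\bt$-pearl $y$ by fiat, so they land in $\mathbf{C}_\bl\times\mathbf{C}_\bt$ unconditionally; and the inverse is correct simply because the non-crossing arch system of $\bar c$ depends only on the underlying unrooted plane tree, so after gluing, $x$ is matched with $r$ again --- no face-structure lemma or case analysis in the spirit of Proposition~\ref{pro:lemma}(ii) is needed.
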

\begin{proof}
  The first result is immediate: given a balanced $\cQ$-companion tree
  without defects, forgetting the root yields an unrooted
  $\cQ$-companion tree without defects with the same necklace
  distribution, and conversely given an unrooted $\cQ$-companion tree
  without defects, the inverse closure $\bar c$ allows us to identify the
  unique unmatched $\ws$-pearl where it can be rooted to have a
  balanced tree. See Figure~\ref{fig:internaldefects}.

  For the second result, illustrated by Figure~\ref{fig:unbalanced}, observe that a $\ws$-rooted $\cQ$-companion
  tree $\tau'$ is unbalanced if its root $\ws$-pearl $x$ is matched by
  a $\bl$-pearl $y$ during closure. The $\bl$-pearl $y$ carries in
  $\tau'$ a blue edge leading to a $\bt$-pearl $z$. Breaking this blue
  edge and rerooting the resulting two trees on the pearls $y$ and $z$
  respectively yield a $\bl$-rooted and a $\bt$-rooted $\cQ$-companion
  trees, again both without defects. Conversely given such a pair of
  rooted trees one can join the $\bl$-root pearl $y$ of the first tree
  to the $\bt$-root pearl $z$ of the second tree with a blue edge and
  root the resulting tree on the $\ws$-pearl that is matched with $y$
  in the closure $\bar c$. This construction is clearly always well
  defined and it is the inverse of the previous one.
\end{proof}


\subsection{The decomposition of rooted $\cQ$-companion trees without defects}\label{sec:decQcompanion}
\begin{figure}[t]
\centering
\includegraphics[scale=.4]{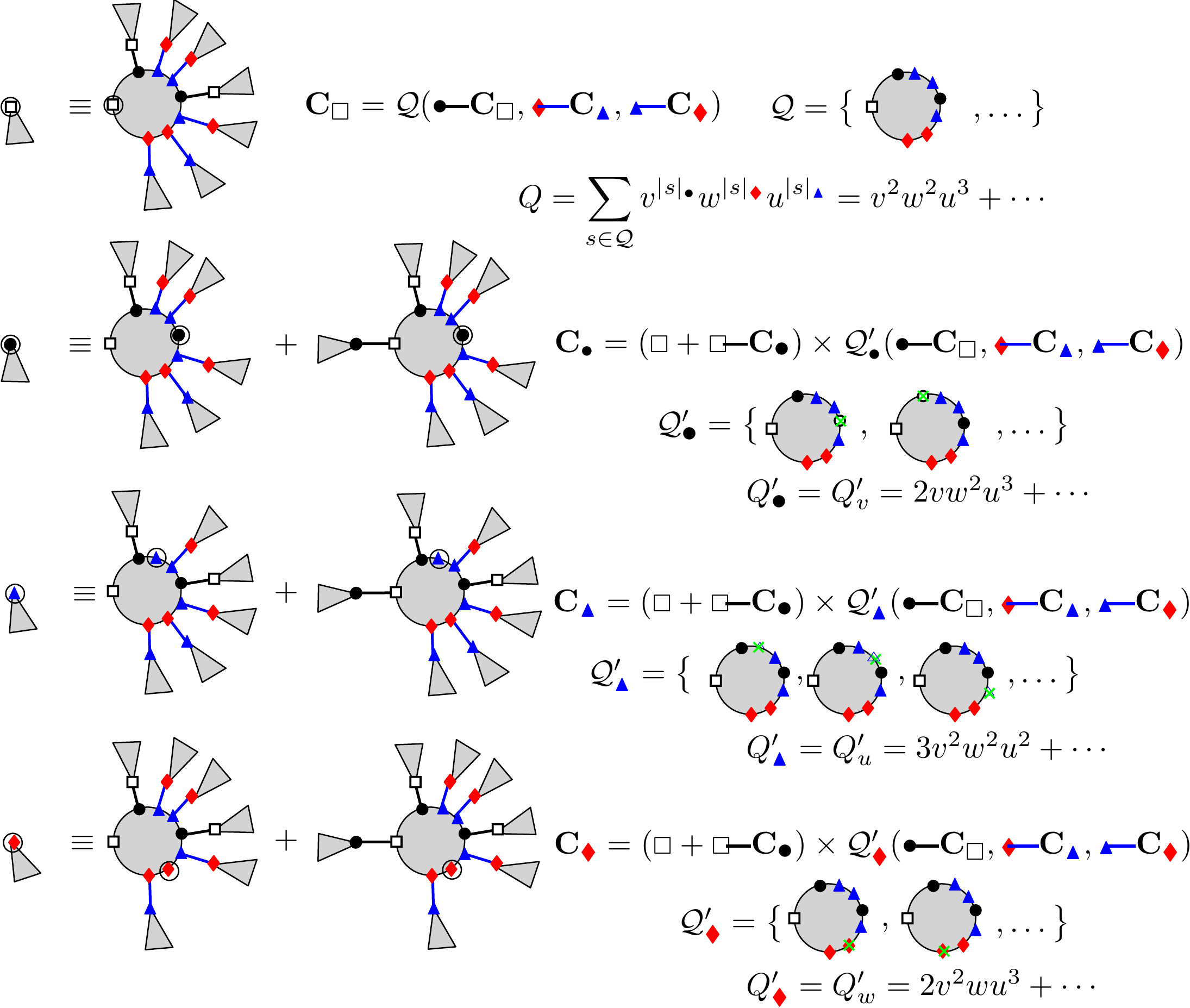}
  \caption{\label{fig:QNodes} The decomposition of $\mathbf{C}_\wc$, illustrated in the case of a vertex with necklace type $\bc\bt\bt\bc\bt\bl\bl$, and the derived necklaces involved in the decompositions of $\mathbf{C}_\bc$, $\mathbf{C}_\bt$ and $\mathbf{C}_\bl$.}
\end{figure}

In view of the previous section, it useful to consider more closely
the families $\mathbf{C}_\ws$, $\mathbf{C}_\bc$, $\mathbf{C}_\bl$ and
$\mathbf{C}_\bt$, and since these are families of rooted trees without
balance conditions, a natural approach is to perform a root vertex
decomposition.  The analysis of possible root necklaces in
$\cQ$-companion trees is illustrated by Figure~\ref{fig:QNodes}:
\begin{itemize}
\item the set of root vertex types of $\ws$-rooted $\cQ$-companion trees is $\cQ$, since each necklace $s\in\cQ$ has exactly one $\ws$-pearl,
\item the set $\cQ'_\bc$ of root vertex types of $\bc$-rooted $\cQ$-companion trees is the union for all necklaces $s\in\cQ$ of the $|s|_\bc$ different  rerootings of $s$ on a $\bc$-pearl,
\item the set $\cQ'_\bl$ of $\bl$-rooted $\cQ$-companion trees and $\cQ'_\bt$ of $\bt$-rooted $\cQ$-companion trees are obtained similarly.
\end{itemize}
Any $\ws$-rooted $\cQ$-companion tree without defects can thus be uniquely produced by selecting a necklace $s\in\cQ$ together with $|s|_\bc$ subtrees from $\cQ_\ws$, $|s|_\bl$ subtrees from $\cQ_\bt$ and $|s|_\bt$ subtrees from $\cQ_\bl$, and attaching these subtrees to the pearls of $s$. This operation is summarized as $\mathbf{C}_\ws\equiv{\cQ}(\bcedge\mathbf{C}_\ws,\bledge\mathbf{C}_\bt,\btedge\mathbf{C}_\bl)$.

The same approach allows us to deal with $\bc$-rooted $\cQ$-companion trees without defects, upon taking $s\in\cQ_\bullet'$ and adding a possibly empty extra subtree in $\cQ_\bullet$ to attach to the $\ws$-pearl of $s$. The other classes $\mathbf{C}_\bl$ and $\mathbf{C}_\bt$ admit similar decompositions. As a direct consequence, we have the following theorem.

\begin{thm}\label{pro:CFS}
  The standard root vertex decomposition of multi-type rooted trees yields the following context-free specification of rooted $\cQ$-companion trees without defects:
\begin{align}\label{eqn:combiCFS}
  \left\{
  \begin{array}{rclcl}
    \mathbf{C}_\ws
    &\equiv&{\cQ}(\bcedge\mathbf{C}_\ws,\bledge\mathbf{C}_\bt,\btedge\mathbf{C}_\bl),\\
    \mathbf{C}_\bc
    &\equiv&(\ws+\wsedge\mathbf{C}_\bc)\times {\cQ}'_\bc (\bcedge\mathbf{C}_\ws,\bledge\mathbf{C}_\bt,\btedge\mathbf{C}_\bl),\\
    \mathbf{C}_\bl
    &\equiv&(\ws+\wsedge\mathbf{C}_\bc)\times{\cQ}'_\bl (\bcedge\mathbf{C}_\ws,\bledge\mathbf{C}_\bt,\btedge\mathbf{C}_\bl),\\
    \mathbf{C}_\bt
    &\equiv&(\ws+\wsedge\mathbf{C}_\bc)\times{\cQ}'_\bt (\bcedge\mathbf{C}_\ws,\bledge\mathbf{C}_\bt,\btedge\mathbf{C}_\bl),\\
    \mathbf{C}^{\textcolor{black}{\fullmoon}}
    &\equiv&(\ws+\wsedge\mathbf{C}_\bc)\times{\cQ}(\bcedge\mathbf{C}_\ws,\bledge\mathbf{C}_\bt,\btedge\mathbf{C}_\bl),
  \end{array}
  \right.
\end{align}
where ${\cQ}(\bcedge\mathbf{C}_\ws,\bledge\mathbf{C}_\bt,\bcedge\mathbf{C}_\bl)$ denotes the set of trees obtained from a necklace of $\cQ$ by attaching to each $\bc$-pearl a subtree of the form $\bcedge\mathbf{C}_\ws$, to each $\bt$-pearl a subtree of the form $\btedge\mathbf{C}_\bl$, and to each $\bl$-pearl a subtree of the form $\bledge\mathbf{C}_\bt$, and similarly for the other equations, and where $\mathbf{C}^{\textcolor{black}{\fullmoon}}$ denotes the set of unrooted $\cQ$-companion trees without defects with a marked necklace. 

The first four equations yield our interpretation of System~\eqref{eq:CFS}, upon introducing the gf $C_\ws$, $C_\bc$, $C_\bl$ and $C_\bt$ of $\ws$-, $\bc$-, $\bl$- and $\bt$-rooted $\cQ$-companion trees, with $C_\ws=\sum_{\tau'\in\mathbf{C}_\ws}t^{|\tau'|}$, etc. 
\end{thm}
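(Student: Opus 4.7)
The plan is to apply the standard root vertex decomposition of plane trees, tracking how the incidence constraints in the definition of $\cQ$-companion trees (Section~\ref{sec:defQcompanion}), combined with the defect-free condition, force the admissible subtree types at each pearl of the root necklace. The whole argument is a direct case analysis; no machinery beyond the definitions is needed.

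First I would isolate the root necklace $s$ of a rooted $\cQ$-companion tree without defects and classify the subtree hanging from each of its pearls. The edge rules leave no ambiguity: each non-root $\bc$-pearl of $s$ carries a black edge $\be$ to the $\ws$-pearl of a subtree in $\mathbf{C}_\ws$; each $\bl$-pearl of $s$ carries a blue edge $\pe$ to the $\bt$-pearl of a subtree in $\mathbf{C}_\bt$; and each non-root $\bt$-pearl, which cannot be free in the absence of defects, carries a blue edge to the $\bl$-pearl of a subtree in $\mathbf{C}_\bl$. The only pearl whose status is not determined is the $\ws$-pearl of $s$: when non-root it is either free or the endpoint of a black edge going to a subtree in $\mathbf{C}_\bc$, contributing a factor $\ws+\wsedge\mathbf{C}_\bc$.

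The five equations then follow by case analysis on the type of the root pearl. In $\mathbf{C}_\ws$ the root is precisely the $\ws$-pearl of $s$, so that pearl is free and the decomposition reduces to $\cQ$ acting on the three edge-type factors. In $\mathbf{C}_\bc$, $\mathbf{C}_\bl$ and $\mathbf{C}_\bt$ the root is a distinguished non-$\ws$ pearl and the root necklace ranges over $\cQ'_\bc$, $\cQ'_\bl$, $\cQ'_\bt$ respectively; in each of these three cases the $\ws$-pearl of $s$ is non-root and contributes the factor $(\ws+\wsedge\mathbf{C}_\bc)$. The fifth equation is the analogous decomposition for unrooted $\cQ$-companion trees with a distinguished necklace: that necklace ranges again over $\cQ$ and its $\ws$-pearl, being non-root, contributes the same factor. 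Each decomposition is bijective because the root pearl (or the marked necklace) canonically determines $s$ and hence the partition into subtrees.

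Translating the first four combinatorial identities into generating functions is then routine symbolic bookkeeping: a weight $t$ appears for the root vertex, $\cQ(\bcedge\mathbf{C}_\ws,\bledge\mathbf{C}_\bt,\btedge\mathbf{C}_\bl)$ becomes $Q(C_\ws,C_\bt,C_\bl)$, the rerooted families $\cQ'_\bc$, $\cQ'_\bl$, $\cQ'_\bt$ correspond to the partial derivatives $Q'_v$, $Q'_w$, $Q'_u$ evaluated at $(C_\ws,C_\bt,C_\bl)$, and the factor $(\ws+\wsedge\mathbf{C}_\bc)$ becomes $(1+C_\bc)$, matching System~\eqref{eq:CFS}. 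No real obstacle arises; the only point requiring care is to keep the variable-to-pearl correspondence ($v\leftrightarrow\bc$, $w\leftrightarrow\bl$, $u\leftrightarrow\bt$) consistent with the subtree-to-pearl correspondence throughout, so that partial derivatives line up with the rerooted necklace classes.
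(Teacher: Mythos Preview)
Your proposal is correct and follows exactly the route taken in the paper: the argument given in Section~\ref{sec:decQcompanion} just before the theorem statement is precisely this root-necklace decomposition, with the same case analysis on the root pearl type and the same observation that the $\ws$-pearl of a non-$\ws$-rooted (or merely marked) necklace contributes the factor $(\ws+\wsedge\mathbf{C}_\bc)$. Your explicit mention that the defect-free condition is what forces every non-root $\bt$-pearl to carry a blue edge is the one point the paper leaves implicit, so if anything your write-up is slightly more detailed.
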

In the terminology of \cite{gs:BM-ICM,FS}, Theorem~\ref{pro:CFS} says
that rooted $\cQ$-companion trees without defects form a simple
variety of ordered (or plane) multi-type trees, and more precisely,
when the set of allowed necklaces $\cQ$ is finite, the resulting
specification is said to be \emph{$\mathbb{N}$-{algebraic}}.

\subsection{Non-negative $\cQ$-trees and rooted $\cQ$-companion trees}\label{sec:markedQtrees}

The first part of Theorem~\ref{thm:un-balanced} can be combined with Theorem~\ref{thm:main} to state the following:
\begin{cor}\label{cor:F0}
  There are necklace-preserving bijections
  \begin{align*}
    \mathbf{C}_\ws\equiv\mathbf{B}_0\cup (\mathbf{C}_\bl\times\mathbf{C}_\bt)
    \equiv\mathcal{F}_0\cup(\mathbf{C}_\bl\times\mathbf{C}_\bt).
  \end{align*}
  In particular, $f=F(t,0)$, the gf of non-negative
  $\cQ$-trees with excess 0,
  can be expressed in terms of the gf
  $C_{\ws}$, $C_\bl$, and $C_\bt$ of 
  $\ws$-rooted, 
  $\bl$-rooted, and $\bt$-rooted $\cQ$-companion trees as:
  \[
  f={C}_\ws- ({C}_\bl\cdot{C}_\bt),
  \]
  yielding our combinatorial interpretation of the first form of Equation~\eqref{eq:fCFS}.
\end{cor}

Next observe that Theorem~\ref{thm:main} immediately extends, upon marking an arbitrary necklace, to a necklace-preserving bijection between the classes $\mathcal{F}_0^{\fullmoon}$ and $\mathcal{C}^{\fullmoon}$. With the last equation of the system in Theorem~\ref{pro:CFS}, this yields the following:
\begin{cor}\label{cor:F0O}
  There are necklace-preserving bijections 
  \begin{align*}
  \mathcal{F}_0^{\fullmoon}\equiv\mathbf{C}^{\fullmoon}\equiv
  (\ws+\wsedge\mathbf{C}_\bc)\times{\cQ}(\bcedge\mathbf{C}_\ws,\bledge\mathbf{C}_\bt,\btedge\mathbf{C}_\bl),
   \end{align*}
   In particular the gf $\frac{\partial}{\partial t}f$ of non-negative $\cQ$-trees with excess 0 and with a marked necklace is given as
   \begin{align*}
  { \frac{d}{dt}}f&=(1+C_\bc)\cdot Q(C_\ws,C_\bt,C_\bl),
   \end{align*}
   and this yields our combinatorial interpretation of the second form of Equation~\eqref{eq:fCFS}.
\end{cor}

Finally observe that Theorem~\ref{thm:main} also extends to a necklace-preserving bijection between the family $\mathcal{F}_0^\bl$ of non-negative $\cQ$-trees with excess 0 and with a marked $\bl$-pearl, and the family 
$\mathbf{U}_0$ of unbalanced $\cQ$-companion trees without defects: indeed given a non-negative $\cQ$-tree $\tau$ with
excess 0 and a marked $\bl$-pearl $x$ of $\tau$, an unbalanced
$\cQ$-companion tree is obtained by rerooting $\phi(\tau)$ on the
$\ws$-pearl matched to $x$ in $\phi(\tau)$. This yields the following:
\begin{cor}\label{cor:F0pbl}
  There is a necklace-preserving bijection
  \[
  \mathcal{F}_0\cup\mathcal{F}_0^\bl\equiv \mathbf{B}_0\cup\mathbf{U_0}=\mathbf{C}_\ws.
  \]
  In particular, the gf $f^\bl$ of non-negative $\cQ$-trees with excess 0 and a marked $\bl$-pearl satisfies
  \[
  f+f^\bl=C_\ws.
  \]
\end{cor}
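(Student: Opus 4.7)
The plan is to split the claimed bijection into two pieces parallel to the decomposition $\mathbf{C}_\ws=\mathbf{B}_0\cup\mathbf{U}_0$ exhibited in Section~\ref{sec:unrooted}: the piece $\mathcal{F}_0\equiv\mathbf{B}_0$ is already known by specializing Theorem~\ref{thm:main} to $k=0$, so the only new work is to exhibit a necklace preserving bijection $\psi:\mathcal{F}_0^\bl\to\mathbf{U}_0$. The forward direction is the one sketched in the paragraph preceding the statement: starting from $(\tau,x)\in\mathcal{F}_0^\bl$, form the balanced companion tree $\tau'=\phi(\tau)\in\mathbf{B}_0$ (Theorem~\ref{thm:main}), then use the inverse closure $\bar c(\tau')$ to locate the unique $\ws$-pearl $y$ matched with $x$, and reroot $\tau'$ at $y$ to produce $\psi(\tau,x)$.

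The first step to verify is that $\psi(\tau,x)$ genuinely lies in $\mathbf{U}_0$. The number of defects, the necklace distribution, and the adjacency structure are all invariant under rerooting, so $\psi(\tau,x)$ is an $\ws$-rooted $\cQ$-companion tree without defects. Because the $\bl$-to-$\ws$ counterclockwise matching produced by $\bar c$ depends only on the underlying planar structure around the tree (the definition in Section~\ref{sec:closure} is purely local to corners), rerooting does not alter which $\ws$-pearl is matched to which $\bl$-pearl. Hence in $\bar c(\psi(\tau,x))$ the new root pearl $y$ is still matched with $x$, which by definition makes $\psi(\tau,x)$ unbalanced.

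For the inverse, given $\tau'\in\mathbf{U}_0$ with root $\ws$-pearl $y$, the closure $\bar c(\tau')$ matches $y$ to a unique $\bl$-pearl $x$ and leaves exactly one $\ws$-pearl $y_0$ unmatched (because $|\tau'|_\bl+1=|\tau'|_\ws$ as noted in Section~\ref{sec:defQcompanion}). Reroot $\tau'$ at $y_0$ to obtain a balanced $\ws$-rooted $\cQ$-companion tree $\tau''\in\mathbf{B}_0$, apply the inverse rewiring $\bar\phi$ of Proposition~\ref{pro:inverserewiring} to get $\tau=\bar\phi(\tau'')\in\mathcal{F}_0$, and mark in $\tau$ the $\bl$-pearl $x$. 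Again using that $\bar c$ is root-independent, one checks that $\bar c(\tau'')=\bar c(\tau')$ as unrooted matchings, so $x$ is indeed a $\bl$-pearl of $\tau''$ and hence of $\tau$. The two constructions are mutually inverse because rerooting is self-inverse and $\phi,\bar\phi$ are mutually inverse on $\mathbf{B}_0$ and $\mathcal{F}_0$.

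Putting the two bijections together yields a necklace preserving bijection $\mathcal{F}_0\cup\mathcal{F}_0^\bl\equiv\mathbf{B}_0\cup\mathbf{U}_0=\mathbf{C}_\ws$, from which the identity $f+f'_\bl=C_\ws$ on generating functions follows by taking cardinalities weighted by $t^{|\tau|}$. The only delicate point in the argument is the root-independence of the planar matching produced by $\bar c$, which is what makes the rerooting step consistent on both sides; all other verifications are direct from the definitions.
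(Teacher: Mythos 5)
Your proof is correct and follows the same approach as the paper: the text preceding the corollary sketches the bijection $\mathcal{F}_0^\bl\to\mathbf{U}_0$ in a single sentence (form $\phi(\tau)$, reroot at the $\ws$-pearl matched to the marked $\bl$-pearl), and you have spelled out the verification, in particular the key point that the $\bl$-to-$\ws$ matching produced by $\bar c$ is root-independent, which is exactly what makes the rerooting step land in $\mathbf{U}_0$ and makes the forward and backward maps mutually inverse. Combined with $\mathcal{F}_0\equiv\mathbf{B}_0$ from Theorem~\ref{thm:main} at $k=0$ and $\mathbf{C}_\ws=\mathbf{B}_0\cup\mathbf{U}_0$, everything checks out against the definitions and Propositions~\ref{pro:rewiring} and~\ref{pro:inverserewiring}.
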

The latter corollary gives an interesting alternative interpretation of
the series $C_\ws$ as gf of possibly-$\bl$-marked non-negative $\cQ$-trees
with excess 0. Similar interpretations can be given to the series
$C_\bc$, $C_\bl$ and $C_\bt$.

\section{Examples, weighted extensions and linear special case}
\label{sec:appli}
\subsection{Planar $\lambda$-terms}

\begin{figure}[t]
  \begin{center}
    \includegraphics[scale=.3]{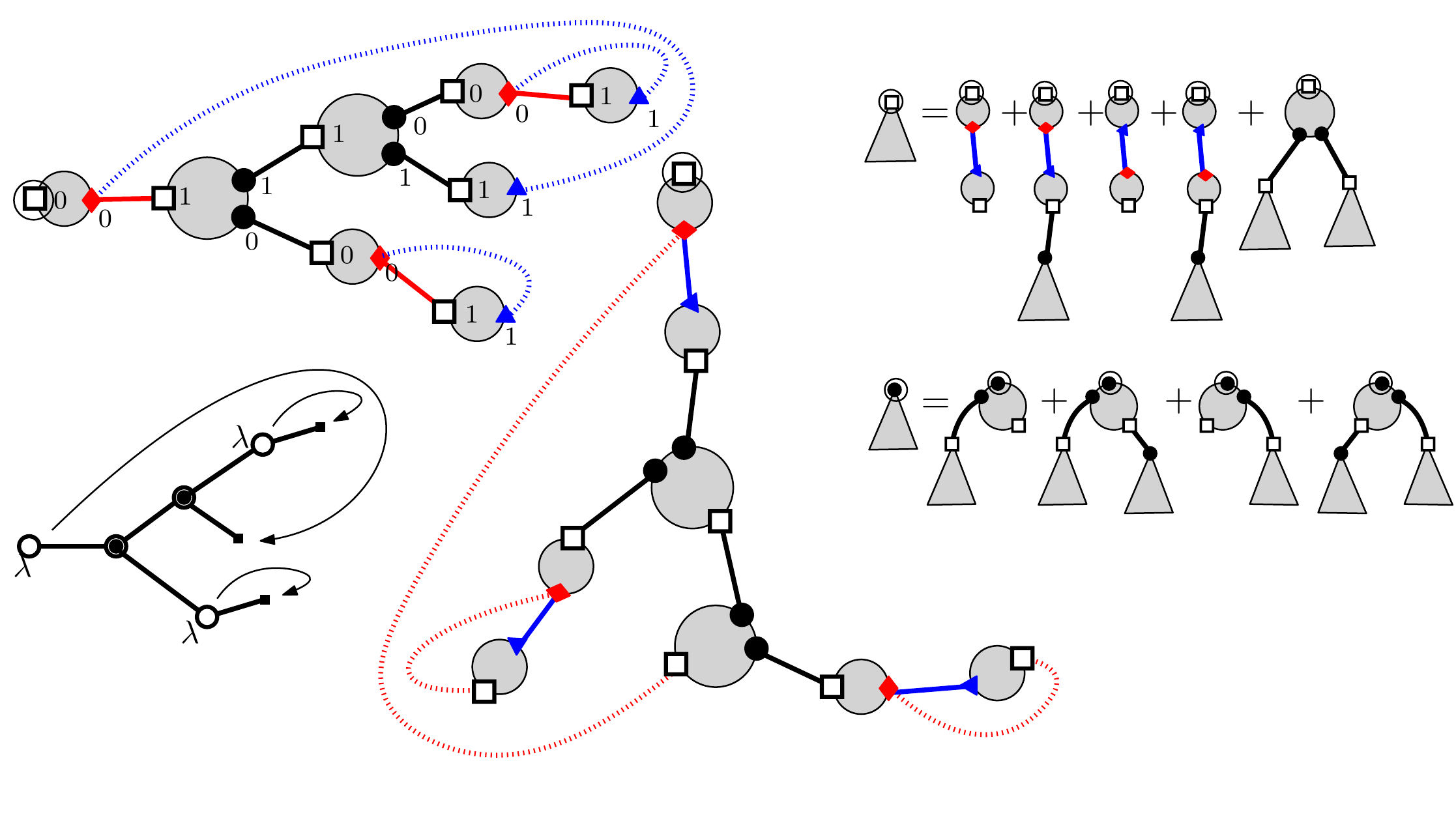}\vspace{-1em}
  \end{center}
  \caption{\label{fig:lambda-companion}A closed planar $\lambda$-term (bottom left), the corresponding non-negative $\cQ_\lambda$-tree (with blue closing edges) and its associated balanced $\cQ_\lambda$-companion tree (with red inverse closure edges). On the right the context-free specification  of $\ws$- and $\bc$-rooted $\cQ_\lambda$-companion trees without excess.}
\end{figure}

Planar $\lambda$-terms, as introduced in \cite{Zeilberger-Giorgetti},
can be described as unary-binary trees with three types of vertices
corresponding to variables (leaves), abstractions (unary nodes), and
applications (binary nodes), and admitting an injective mapping of
abstractions to variables such that each abstraction is matched to a
variable in its subtree. In particular, in each subtree the number of
variables is no less than the number of abstractions and, as
illustrated by Figure~\ref{fig:lambda-companion}, these structures can
immediately be interpreted as non-negative $Q_\lambda$-trees, with
vertex type generating function $Q_\lambda(v,w,u)=u+v^2+w$ (recall
Figure~\ref{fig:Necklaces}). 

As a consequence, the gf of planar $\lambda$-terms is governed by the corresponding catalytic
equation:
\begin{align*}
  {F}(u)=t{Q_\lambda}({F}(u),{\textstyle\frac1u}({F}(u)-F(0)),u)=tu+t{F}(u)^2+{\textstyle \frac tu}(F(u)-F(0)).
\end{align*}
Of particular interest are \emph{closed planar $\lambda$-terms} of
size $n$, which precisely correspond to non-negative
$\cQ_\lambda$-trees with excess 0 having $n$ abstraction necklaces
(and $n$ variable necklaces and $n-1$ application necklaces).
Theorem~\ref{thm:main} yields a bijection between
closed planar $\lambda$-terms with $n$ abstractions, and $n$
variables, and $n-1$ applications, and unrooted
$\cQ_\lambda$-companion trees with the corresponding necklaces.
According to Theorem~\ref{pro:CFS}, the resulting $\ws$-rooted
$\cQ_\lambda$-companion trees are governed by the context-free specification of Figure~\ref{fig:lambda-companion}, and their gf is given by the following  $\mathbb{N}$-algebraic system: 
\begin{align*}\left\{
  \begin{array}{rcl}
  C_\ws&=&tC_\ws^2+2t^2(1+C_\bc),\\
  C_\bc&=&2tC_\ws(1+C_\bc).
  \end{array}\right.
\end{align*}
Equivalently, the specification can be wrapped up in a single equation, so that $\ws$-rooted  
$\cQ_\lambda$-companion trees form a simple variety of trees in the sense of \cite{FS}, with their gf satisfying the following functional equation:
\begin{align}\label{eq:lambda-companion-tree-equation}
  C_\ws=tC_\ws^2+\frac{2t^2}{1-2tC_\ws}=\frac{2t^2}{(1-tC_\ws)(1-2tC_\ws)}.
\end{align}

Rewiring is then a bijection between the class of closed planar
$\lambda$-terms, and the class $\mathbf{C}_\lambda$ of unrooted
$\mathbf{Q}_\lambda$-companion trees (Theorems~\ref{thm:main}
and~\ref{thm:un-balanced}). 
As far as we know this is the first direct bijection between planar $\lambda$-terms and a class of unrooted simple generated trees.

Rewiring also provides a bijection
between closed planar $\lambda$-terms with at most one marked
abstraction and $\ws$-rooted $\cQ_\lambda$-companion trees
(Corollary~\ref{cor:F0pbl}).  Equivalently, since a closed $\lambda$
term of size $3n-1$ has $n$ abstractions, the latter bijection can be
rephrased as a $1$-to-$(n+1)$ correspondence between planar
$\lambda$-terms and $\ws$-rooted $\cQ_\lambda$-companion trees:
\begin{align*}
  f_n=\frac{1}{n+1}[t^{3n-1}](f+f^{\bl})=\frac1{n+1}[t^{3n-1}]C_\ws=\frac{c_n}{n+1},
\end{align*}
where the $c_n$ are given by Equation~\eqref{eq:lambda-companion-tree-equation} and satisfies $c_1=2$, $c_2=12$ and 
\[
c_n = \frac{2^4\cdot 3(3n-5)(3n-7)}{n(n-1)}\cdot c_{n-2}
\quad \textrm{for all $n\geq3$,}\]
so that
\[
c_n=\frac{2^{2n-1}(3n-3)!!}{n!(n-1)!!}
\quad\textrm{and}\quad f_n=\frac{2^{2n-1}(3n-3)!!}{(n+1)!(n-1)!!}.
\]
To the best of our knowledge this is also the first direct
relation between closed planar $\lambda$-terms and a family of
simply generated trees.

\subsection{Non-separable planar maps, two-stack-sortable permutations, fighting fish and ternary trees}

In \cite{GW} Goulden and West proposed a join catalytic recursive decomposition
of non-separable planar maps and two stack sortable
permutations. Duchi and Henriet extended this decomposition in
\cite{DuHe} to fighting fish in a way that then allowed them to give a
direct bijection between non-separable planar maps and fighting fish.
Up to a change of variable $v\to 1+u$, the underlying catalytic decomposition
yields for the generating function of non-separable planar maps
with respect to the number of edges ($t$) and pending half-edges in
the unbounded face ($u$), the following catalytic equation:
\begin{align}\label{eq:F-NS}
  {F}(u)&
  =t\cdot(1+u)\cdot(1+{F}(u))\cdot\left(1+{\textstyle \frac1u}(F(u)-F(0))\right).
\end{align}
Moreover the derivation trees naturally associated to these
decompositions are non-negative $\cQ_{ns}$-trees with
$\cQ_{ns}=(\varepsilon+\bc)\cdot(\varepsilon+\bl)\cdot(\varepsilon+\bt)$,
as given by Figure~\ref{fig:NS-nodes}. In particular, Equation~\eqref{eq:F-NS} can be written as
\begin{align*}
  F(u)&=t\cdot Q_{ns}(F(u),{\textstyle\frac1u(F(u)-F(1))},u),
\end{align*}
with $Q_{ns}(v,w,u)=(1+u)(1+v)(1+w)$ the necklace type generating function of $\mathcal{Q}_{ns}$.
\begin{figure}[t]
  \begin{center}
      \includegraphics[scale=.4]{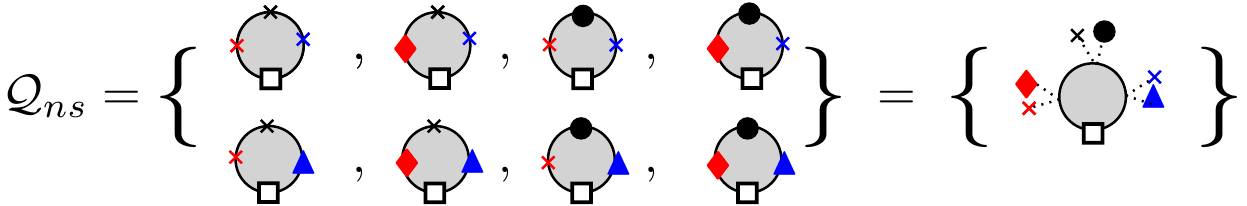}
  \end{center}
  \caption{\label{fig:NS-nodes}The 8 types of decorated necklaces of $\cQ_{ns}$}
\end{figure}
\begin{figure}[t]
  \begin{center}
      \includegraphics[scale=.4]{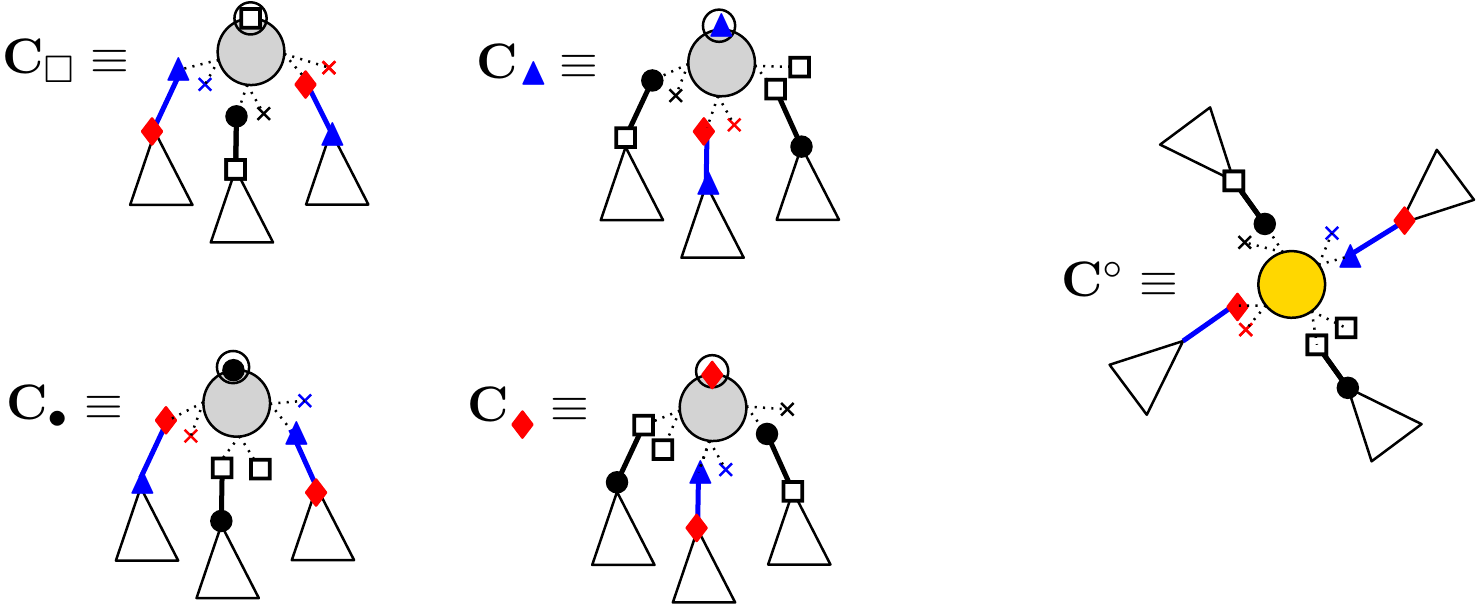} 
  \end{center}
  \caption{\label{fig:NS-derivation}Recursive decompositions of the 4 classes of rooted $\cQ_{ns}$-companion trees and of the class $\mathbf{C}^\circ$ of $\cQ_{ns}$-companion trees with a marked necklace.}
\end{figure}

Theorem~\ref{pro:CFS} then directly gives the specification of rooted $\cQ_{ns}$-companion trees as illustrated by Figure~\ref{fig:NS-derivation}, and
their generating function satisfy the
companion $\mathbb{N}$-algebraic system:
\begin{align}\label{eq:CFSfish}
  \left\{
  \begin{array}{rll}
    {C}_\ws&\!=\qquad t\cdot(1+{C}_\bl)\!\cdot\!(1+{C}_\ws)\!\cdot\!(1+{C}_\bt),\\
    {C}_\bc&\!=\qquad t\cdot(1+{C}_\bc)\!\cdot\!(1+{C}_\bl)\!\cdot\!(1+{C}_\bt),\\
    {C}_\bl&\!=\qquad t\cdot(1+{C}_\bc)\!\cdot\!(1+{C}_\bl)\!\cdot\!(1+{C}_\ws),\\
    {C}_\bt&\!=\qquad t\cdot(1+{C}_\bc)\!\cdot\!(1+{C}_\ws)\!\cdot\!(1+{C}_\bt),\\
    {C}^{\fullmoon}&\!=t\cdot (1+{C}_\bc)\!\cdot\!(1+{C}_\bl)\!\cdot\!(1+{C}_\ws)\!\cdot\!(1+{C}_\bt).
  \end{array}
  \right.
\end{align}
In particular, the first four classes are just isomorphic classes of
rooted ternary trees with deterministic colorings.  According to
Theorem~\ref{thm:main} rewiring yields a bijection between
non-negative $Q_{ns}$-trees without defects and unrooted colored
ternary trees (with deterministic 4-pearl coloring).

Following \cite{DuHe,henri2025}, the recursive
bijection between rooted non-separable planar maps and non-negative
$\mathcal{Q}_{ns}$-trees underlying Equation~\eqref{eq:F-NS} can be
realized as a direct bijection, upon using a leftmost depth first
search traversal to associate to each rooted non-separable planar map
its derivation tree in the family of non-negative
$\mathcal{Q}_{ns}$-trees.  In view of Corollary~\ref{cor:F0O}, this
then gives a direct $1$-to-$(n+1)$ correspondence between rooted
non-separable planar maps with $n$ edges and vertex-rooted colored
ternary trees with $n$ nodes, hence providing a new direct
combinatorial derivation of Tutte's formula $\frac{1}{n+1}\cdot
\frac{2}{2n+1}{3n\choose n}$ for the number of rooted non-separable
planar maps.

\subsection{Parking trees}

Non-negative $\cQ$-trees also encompass the \emph{parking trees}
introduced in \cite{Pan} and studied in
\cite{Chen,contat24, contat23,contatcurien21}. In this
context, as discussed in \cite{henri2025}, non-negative $\cQ$-trees can be viewed as a generalization of
parking trees where the $\bt$-pearls play the role of cars and the
$\bl$-pearls that of parking spots, and non-negative trees
with excess 0 correspond to so-called \emph{fully parked trees}, in
which all cars are parked at the end of the process: these trees play a
distinguished role in the study of the parking processes on random
trees.
Our rewiring bijection provides in particular a combinatorial lift of
the coupling introduced independently in \cite{contat23} to relate the
properties of a specific type of such random fully parked trees to
random Galton-Watson trees.

\subsection{Generic catalytic equations and weighted $\cQ$-trees}\label{sec:generic}
\subsubsection{Weighted trees.}
Our approach immediately extends to weighted $\cQ$-trees: 
Given a collection $(q_s)_{s\in\cQ}$ of formal weights associated to
the vertex types in $\cQ$, let the \emph{weight} of a $\cQ$-tree
$\tau$ be the product $q_\tau=\prod_{v\in\tau}q_{s(v)}$ of the weights
of the types of its vertices. Then the weighted generating function
$F(u)\equiv F(t,u)\in\mathbb{Q}[(q_s)_s][[t,u]]$
\[
F(u)=\sum_{\tau\in\mathcal{F}}q_\tau t^{|\tau|}u^{\mathrm{exc}(\tau)},
\]
satisfies Equation~\eqref{eqn:eq1}
with $Q(v,w,u)$ the weighted vertex type generating function
\[Q(v,w,u)=\sum_{s\in \cQ}q_s v^{|s|_\bc}w^{|s|_\bl}u^{|s|_\bt}.\]
Now all previously stated theorems and corollaries involve necklace-preserving bijections, hence the weights are taken into account in a
straightforward way and yield combinatorial versions of
Equations~\eqref{eqn:eq1} and~\eqref{eq:CFS}. The results also remain
true with non formal weights as soon as these weights are such that the
series $F(t,u)$ is well defined.

\subsubsection{Generic catalytic equations.}
The above weighted framework allows us to fit arbitrary catalytic
equations of the form~\eqref{eqn:eq1} within our framework: assume
that
\[Q(v,w,u)=\sum_{i,j,k}q_{i,j,k}v^iw^ju^k,\] and that the weights $q_{i,j,k}$ are such that Equation~\eqref{eqn:eq1} has a formal power series solution $F(u)\equiv F(t,u)$ in the variables $t$ and $u$ with coefficients in the same space as the weights. Then $F(u)$ is the generating series of weighted non-negative $\cQ_{\mathrm{ord}}$-trees with $\cQ_{\mathrm{ord}}=\{\bc^i\bl^j\bt^k\mid i,j,k\geq0\}$
and weights $q_{\bc^i\bl^j\bt^k}=q_{i,j,k}$, and
Formulas~\eqref{eqn:eq1} and~\eqref{eq:CFS} holds with their
interpretations in terms of weighted $\cQ_{\mathrm{ord}}$-companion trees. 

\subsubsection{Inhomogeneous catalytic specifications.}
Another application of the weighted framework is to allow to consider
more general notion of sizes, beyond the standard definition as number
of necklaces. Given a size function $\sigma:\cQ\to\mathbb{N}$ on the
set of necklaces, let the size of a $\cQ$-tree $\tau$ be defined as the
sum of the sizes of its necklaces:
$|\tau|_\sigma=\sum_{v\in\tau}\sigma(s(v))$. Then assuming that the
set of non-negative $\cQ$-trees $\tau$ with size $|\tau|_\sigma=n$ is
finite for all $n\geq0$, the corresponding gf of non-negative
$\cQ$-trees
$F_\sigma(u)=\sum_{\tau\in\mathcal{F}_{\cQ}}x^{|\tau|_\sigma}u^{\mathrm{exc}(\tau)}$
is the unique formal series solution of the equation
\begin{align}\label{eqn:inhom1}
  F_\sigma&=Q_\sigma\left(F_\sigma,\frac1u(F_\sigma-F_\sigma(0),u,x\right)
\end{align}
with
$Q(v,w,u,x)=\sum_{s\in\cQ}v^{|s|_\bc}w^{|s|_\bl}u^{|s|_\bt}x^{\sigma(s)}$,
and the gf $f$ of non-negative $\cQ$-trees with excess 0 with respect to the size is given by
\begin{align}\label{eqn:inhom2}
F(0)&=C_\ws-C_\bl\cdot C\bt, \qquad\textrm{or}\qquad  \frac{\partial}{\partial x}F(0)=(1+C_\bc)\cdot Q'_x(C_\ws,C_\bt,C_\bl,x),
\end{align}
where $C_\ws$, $C_\bc$, $C_\bl$ and $C_\bt$ are the unique solution of the equation
\begin{align}\label{eqn:inhom3}
\left\{
\begin{array}{rcl}
  C_\ws&=&Q(C_\ws,C_\bt,C_\bl,x),\\
  C_\bc&=&(1+C_\bc)\cdot Q'_v(C_\ws,C_\bt,C_\bl,x),\\
  C_\bl&=&(1+C_\bc)\cdot Q'_w(C_\ws,C_\bt,C_\bl,x),\\
  C_\bt&=&(1+C_\bc)\cdot Q'_u(C_\ws,C_\bt,C_\bl,x).\\
\end{array}
\right.
\end{align}
An example of inhomogeneous catalytic specification is the Tutte equation for rooted quasi-triangular planar maps counted
by non-root vertices and unbounded face degree \cite[Eq. (1)]{tutte}, whose gf satisfies
\begin{align}\label{eqn:triang}
T(x,u)=xu(1+uT(x,u))^2+\frac1u(T(x,u)-T_0).
\end{align}
Identifying Formula~\eqref{eqn:triang} as an instance of
Formula~\eqref{eqn:inhom1} directly yields
Parametrization~\eqref{eqn:inhom2}-\eqref{eqn:inhom3} without taking
the detour of introducing an extra variable $t$ to make the equation
homogeneous.

\subsection{Linear catalytic equations}
A catalytic equation of the form~\eqref{eqn:eq1} is said to be linear
if the series $Q(v,w,u)$ is linear in $v$ and $w$:
$Q(v,w,u)=P(u)+R(u)v+S(u)w$ with $P(u)\neq0$. Accordingly let
$\cQ=\mathcal{P}\cup\mathcal{R}\cup\mathcal{S}$ where
\begin{itemize}
\item  $\mathcal{P}$ is a non empty set of necklaces with a root $\ws$-pearl and possibly some $\bt$-pearls,
\item $\mathcal{R}$ is a set of necklaces with a root $\ws$-pearl, a $\bc$-pearl and possibly some $\bt$-pearls,
\item $\mathcal{S}$ is a set of necklaces  with a root $\ws$-pearl, a $\bl$-pearl and possibly some $\bt$-pearls.
\end{itemize}  
In this situation $\cQ$-trees have a linear structure and can be
interpreted as words on the alphabet $\cQ$. The non-negativity
condition is then similar to the standard non-negativity condition of
Dyck or more generally \L ukasiewicz words, and rewiring is
essentially a variant of the standard prefix (de)coding of plane trees
by \L ukasiewicz words \cite[Chapter 11]{lothaire} or \cite[Chapter
  6]{stanley}.

In particular, since a linear non-negative $\cQ$-tree has exactly one necklace $v\in \mathcal{P}$,  the associated unrooted $\cQ$-companion tree can be rooted at this unique necklace $v$ from $\mathcal{P}$. The root necklace decomposition then immediately yields a combinatorial derivation of the relation
\begin{align}
    f=t\cdot (1+C_\bc)\cdot P(C_\bl).
\end{align}  
Moreover in view of the restricted type of
necklace occurring in $\mathcal{R}$ and $\mathcal{S}$, the context-free
specification of Theorem~\ref{pro:CFS} yields the independent subsystem
for the classes $\mathbf{C}_\bc$ and $\mathbf{C}_\bl$,
\begin{align}
  \left\{
  \begin{array}{rcl}
    C_\bc&=&t\cdot(1+C_\bc)\cdot R(C_\bl),\\
    C_\bl&=&t\cdot(1+C_\bc)\cdot S(C_\bl).
  \end{array}
  \right.
\end{align}
In particular it is noteworthy that when $P$, $R$ and $S$ are polynomials in $\mathbb{N}[u]$ the series $f$ is $\mathbb{N}$-algebraic, as opposed to only $f'$ being $\mathbb{N}$-algebraic in the non linear case.

\section{Conclusions}\label{sec:Conclusion}

In many instances of enumeration result involving catalytic equations, \emph{e.g.}
\cite{ALCO_2020__3_2_433_0,chapoton,Chen,cori-schaeffer,FANG2020111802,GW,tutte,Zeilberger-Giorgetti},
Equation~\eqref{eqn:eq1} arises from the direct translation for the gf
$F(t,u)$ of a catalytic specification \emph{i.e.}, a combinatorial
recursive decomposition that can be put in the form
\eqref{eqn:cataspec}, for a bigraded combinatorial class $\mathcal{F}$
whose objects $\gamma$ are equipped with an additive size parameter
$|\gamma|$ with non-negative increments (marked by $t$), and an
additive catalytic parameter $c(\gamma)$ with signed increments but a
non-negativity constraint (marked by $u$): the series $f=F(t,0)$ is
the gf of the subclass $\mathcal{F}_0=\{\gamma\in\mathcal{F}\mid
c(\gamma)=0\}$ and $\frac d{dt}f$ is a gf for marked
$\mathcal{F}_0$-structures.
Following the Sch\"utzenberger methodology as described for instance
in \cite{gs:BM-ICM}, the fact that $\frac d{dt}f$ can be expressed
positively in terms of the solutions of System~\eqref{eq:CFS} raises
the question of giving a context-free specification of the
form~\eqref{eqn:combiCFS} directly for marked $\mathcal{F}_0$-structures.
To answer this question some knowledge of the actual recursive
decomposition of the $\mathcal{F}$-structures is needed, which is
typically encoded by a family of \emph{derivation trees} describing
the way the recursion unfolds.

The strength of our model of non-negative
$\mathcal{Q}$-trees is that it includes naturally many (most?) of the
derivation trees associated to first order catalytic decompositions in
the literature. As a consequence, our result can be considered as a generic recipe to
convert a catalytic specification governed by
Equation~\eqref{eqn:eq1} into a bijection between the associated
derivation trees for $\mathcal{F}_0$ and simple varieties of
multi-type trees governed by
Equations~\eqref{eq:fCFS}--\eqref{eq:CFS}. Depending on the actual
relation between the underlying combinatorial structures and their
derivation trees, this can then also lead to a direct context-free
specification of the marked $\mathcal{F}_0$-structures counted by
$\frac{d}{dt}f$.

A natural follow-up of this work is to make explicit these direct
context-free specifications and bijections with simple varieties of
trees that can be derived from our result for the various above
mentioned families of combinatorial structures governed by order-one
catalytic equations. The only results of this type we are aware of are
for planar maps, with the notable exception of \cite{fang-fusy-nadeau}.
The search for context-free specifications for planar
maps can be traced back to early work of Cori prompted by
Sch{\"u}tzenberger {\cite{AST_1975__27__1_0,cori-richard}} and has
led to a long and rich series of work {\cite{chapterMaps}} with
ongoing offsprings {\cite{bouttier2021bijective}}. Hopefully the
extension of these ideas to arbitrary structures governed by order one
catalytic equations that we propose here can lead to further
interesting developments.

\section*{Acknowledgments}
We thank Mireille Bousquet-Mélou, Wenjie Fang, Yakob Kahane and Corentin Henriet for interesting discussions and the referees of the conference on Formal Power Series and Algebraic Combinatorics for their useful suggestions of improvement on an extended abstract of this article.

\printbibliography

@article{Bonichon,
   title={On the number of planar Eulerian orientations},
   volume={65},
      DOI={10.1016/j.ejc.2017.04.009},
   journal={European Journal of Combinatorics},
   author={Bonichon, Nicolas and Bousquet-Mélou, Mireille and Dorbec, Paul and Pennarun, Claire},
   year={2017},
   month=oct, pages={59–91} }

@article{Popescu,
  title={General Néron desingularization and approximation},
  volume={104}, DOI={10.1017/S0027763000022698},
  journal={Nagoya Mathematical Journal},
  author={Popescu, Dorin},
  year={1986},
  pages={85–115}}

@inbook{appliedlothaire,
  place={Cambridge},
  series={Encyclopedia of Mathematics and its Applications},
  title={Counting, Coding, and Sampling with Words},
  booktitle={Applied Combinatorics on Words},
  publisher={Cambridge University Press},
  author={Lothaire, M.},
  year={2005},
  pages={478–519},
  collection={Encyclopedia of Mathematics and its Applications}
}

@unpublished{fang-fusy-nadeau,
      title={Tamari intervals and blossoming trees}, 
      author={Wenjie Fang and Éric Fusy and Philippe Nadeau},
      eprint={2312.13159},
      archivePrefix={arXiv},
  primaryClass={math.CO},
      year={2024},
}

@Article{tutte,
  author = 	 {Willam T. Tutte},
  title = 	 {On the enumeration of planar maps},
  journal = 	 {Bull. Amer. Math. Soc.},
  year = 	 1968,
  volume = 	 74,
  pages = 	 {64--74}}

@PHDTHESIS{henri2025,
url = "",
title = "Cartes planaires, intervalles de Tamari et arbres de stationnement : un voyage bijectif",
author = "Henriet, Corentin",
year = "2025",
}

@PHDTHESIS{notar2024,
title = "Geometry-driven algorithms for the efficient solving of combinatorial functional equations",
author = "Notarantonio, Hadrien",
year = "2024",
}

@InProceedings{S23,
  author = {Gilles Schaeffer}, 
  title = 	 {On universal singular exponents in equations with one catalytic parameter of order one},
  year = 	 2023,
  booktitle = {European Conference on Combinatorics, Graph Theory and Applications},
  address = 	 {Prague, Czech Republic},
  month = 	 {August},
  pages = 	 {pp.806-811}}

@InProceedings{DuHe,
  author = 	 {Enrica Duchi and Corentin Henriet},
  title = 	 {Combinatorics of fighting fish, planar maps and Tamari intervals},
  booktitle = {FPSAC, Bengalore. Séminaire Lotharingien de Combinatoire},
  year = 	 2022,
  number = 	 {86B.83}}

@InProceedings{BCNSED,
  author = 	 {Alin Bostan and Frédéric Chyzak and Hadrien Notarantonio and Mohab Safey El Din},
  title = 	 {Algorithms for discrete
differential equations of order 1},
  booktitle = {ISSAC},
  year = 	 2022,
  month = 	 {Jul},
  address = 	 {Lille, France}}

@Misc{Chen,
  key = 	 {Enumeration of fully parked trees},
  author = 	 {Linxiao Chen},
  title = 	 {Enumeration of fully parked trees},
  year = 	 2021,
  eprint={2103.15770},
  archivePrefix={arXiv},
  primaryClass={math.CO}
}

@book{FS,
	Author = {Flajolet, Philippe  and Sedgewick, Robert },
	Date-Modified = {2009-03-28 20:06:51 +0100},
	Keywords = {combinatorics, analytic combinatorics},
	Publisher = {Cambridge University Press},
	Title = {Analytic combinatorics},
	Year = {2009}}

@inproceedings{gs:BM-ICM,
  author = 	 {Bousquet-M\'elou, Mireille},
  title = 	 {Rational and algebraic series in combinatorial enumeration},
  booktitle = {Proceedings of the Internatinal Congress of Mathematicians},
  year = 	 2006}

@article{BMJ,
  author    = {Mireille Bousquet{-}M{\'{e}}lou and
               Arnaud Jehanne},
  title     = {Polynomial equations with one catalytic variable, algebraic series
               and map enumeration},
  journal   = {J. Comb. Theory, Ser. {B}},
  volume    = {96},
  number    = {5},
  pages     = {623--672},
  year      = {2006},
  timestamp = {Tue, 16 Feb 2021 14:05:58 +0100},
  biburl    = {https://dblp.org/rec/journals/jct/Bousquet-MelouJ06.bib},
  bibsource = {dblp computer science bibliography, https://dblp.org}
}

@article{FANG2020111802,
title = {A partial order on Motzkin paths},
journal = {Discrete Math.},
volume = {343},
number = {5},
pages = {111802},
year = {2020},
author = {Wenjie Fang},
}

@article{DNY,
title = {Universal singular exponents in catalytic variable equations},
journal = {Journal of Combinatorial Theory, Series A},
volume = {185},
pages = {105522},
year = {2022},
author = {Michael Drmota and Marc Noy and Guan-Ru Yu},
}

@article{Zeilberger-Giorgetti,
  author    = {Noam Zeilberger and
               Alain Giorgetti},
  title     = {A correspondence between rooted planar maps and normal planar lambda
               terms},
  journal   = {Log. Methods Comput. Sci.},
  volume    = {11},
  number    = {3},
  year      = {2015},
  timestamp = {Thu, 25 Jun 2020 21:29:05 +0200},
  biburl    = {https://dblp.org/rec/journals/corr/ZeilbergerG14.bib},
  bibsource = {dblp computer science bibliography, https://dblp.org}
}

@InCollection{chapterMaps,
  author = 	 {Schaeffer, Gilles},
  title = 	 {Planar Maps},
  booktitle = 	 {Handbook of Enumerative Combinatorics},
  publisher = {Chapman and Hall/CRC},
  year = 	 2015,
  chapter = 	 {V},
  edition = 	 {1st}}

@article{bouttier2021bijective,
     author = {Bouttier, J\'er\'emie and Guitter, Emmanuel and Miermont, Gr\'egory},
     title = {Bijective enumeration of planar bipartite maps with three tight boundaries, or how to slice pairs of pants},
     journal = {Annales Henri Lebesgue},
     pages = {1035--1110},
     publisher = {\'ENS Rennes},
     volume = {5},
     year = {2022},
     doi = {10.5802/ahl.143},
}

@book{AST_1975__27__1_0,
     author = {Cori, Robert},
     title = {Un code pour les graphes planaires et ses applications},
     series = {Ast\'erisque},
     publisher = {Soci\'et\'e math\'ematique de France},
     number = {27},
     year = {1975},
     zbl = {0313.05115},
     mrnumber = {404045},
}

@article{ALCO_2020__3_2_433_0,
     author = {Chapoton, Fr\'ed\'eric},
     title = {Some properties of a new partial order {on~Dyck} paths},
     journal = {Algebraic Combinatorics},
     pages = {433--463},
     publisher = {MathOA foundation},
     volume = {3},
     number = {2},
     year = {2020},
}

@Article{chapoton,
  author = 	 {Chapoton, Frédéric},
  title = 	 {Sur le nombre d'intervalles dans les treillis de Tamari},
  journal = 	 {Sém. Lothar. Combin.},
  year = 	 2005,
  volume = 	 55,
  number = 	 {B55f}
}

@article{GW,
  title={Raney paths and a combinatorial relationship between rooted nonseparable planar maps and two-stack-sortable permutations},
  author={Goulden, Ian P and West, Julian},
  journal={Journal of Combinatorial Theory, Series A},
  volume={75},
  number={2},
  pages={220--242},
  year={1996},
}

@article{cori-schaeffer,
title = {Description trees and Tutte formulas},
journal = {Theoretical Computer Science},
volume = {292},
number = {1},
pages = {165-183},
year = {2003},
note = {Selected Papers in honor of Jean Berstel},
author = {Robert Cori and Gilles Schaeffer},
keywords = {Enumeration, Trees, Planar maps, Quadratic method},
abstract = {In this paper we introduce and enumerate families of description trees. These families of trees consist of plane trees in which the nodes are labelled by nonnegative integers, and where the label of each node satisfies a condition relating it to the labels of its sons. We give a recursive construction of these trees which translates simply in an equation for their generating function. By solving this equation via the quadratic method introduced by Brown and Tutte, we prove that this generating function is algebraic. For some families the number of trees we obtain is equal to the numbers given by Tutte to enumerate different kinds of planar maps. We provide bijections between description trees and corresponding families of planar maps to explain these equalities. Description trees are instances of objects that can be described by description operators; we conjecture that such families of objects have algebraic generating functions. They were find also to be related to the enumeration of pattern avoiding permutations.}
}

@article{cori-richard,
title = {Enumération des graphes planaires à l'aide des séries formelles en variables non commutatives},
journal = {Discrete Math.},
volume = {2},
number = {2},
pages = {115-162},
year = {1972},
author = {Robert Cori and Jean Richard},
abstract = {Résumé
L'énumération des grapises planaires a été developpée par W.T. Tutte et son école. Nous abordons le sujet par des moyens tout à fait différents' nous établissons une bijection entre les graphes planaires et un langage formel. Des relations vérifiées par ce langage formel dans l'algebre des sous-ensembles d'un monoide libre, nous déduisons des équations avec opérateur pour des séries formelles en variables non commutatives. Ce sont des séries génératrices pour graphes planaires; de cette facon, une théorie de ces équations nous permet de donner des formules d'énumérations.}
}

@misc{Contat23,
      title={Parking on trees with a (random) given degree sequence and the Frozen configuration model}, 
      author={Alice Contat},
      year={2023},
      eprint={2312.04472},
      archivePrefix={arXiv},
      primaryClass={math.PR},
}

@Article{contat24,
  author = 	 {Alice Contat},
  title = 	 {Last car decomposition of planar maps},
  journal = 	 {Ann. Inst. Henri Poincaré Comb. Phys. Interact.},
  year = 	 2024}

@Article{contatcurien21,
  author = 	 {Alice Contat and Nicolas Curien},
  title = 	 {Parking on Cayley trees \& Frozen Erdős-Rényi},
  journal = 	 {The Annals of Probability},
  year = 	 2021}

@article{Pan,
title = {Parking functions for mappings},
journal = {Journal of Combinatorial Theory, Series A},
volume = {142},
pages = {1-28},
year = {2016},
author = {Marie-Louise Lackner and Alois Panholzer},
keywords = {Parking functions, Mappings, Enumeration, Phase transition},
abstract = {We apply the concept of parking functions to functional digraphs of mappings by considering the nodes as parking spaces and the directed edges as one-way streets: Each driver has a preferred parking space and starting with this node he follows the edges in the graph until he either finds a free parking space or all reachable parking spaces are occupied. If all drivers are successful we speak of a parking function for the mapping. We transfer well-known characterizations of parking functions to mappings. Via analytic combinatorics techniques we study the total number Mn,m of mapping parking functions, i.e., the number of pairs (f,s) with f:[n]→[n] an n-mapping and s∈[n]m a parking function for f with m drivers, yielding exact and asymptotic results. Moreover, we describe the phase change behaviour appearing at m=n/2 for Mn,m and relate it to previously studied combinatorial contexts.}
}

@Book{stanley,
  author = 	 {Richard P. Stanley},
  title = 	 {Enumerative combinatorics, Vol. 2},
  publisher = 	 {Cambridge University Press},
  year = 	 1999}

@book{lothaire,
author = {M. Lothaire},
place={Cambridge},
edition={2},
series={Cambridge Mathematical Library},
title={Combinatorics on Words},
year={1997}
}

\end{document}